\documentclass{amsart}

\usepackage{amssymb}
\usepackage{overpic}
\usepackage{enumitem}   
\usepackage{graphicx} 
\usepackage{amsrefs}
\usepackage{xcolor}
\usepackage[hidelinks]{hyperref}

\graphicspath{{Figures/}} 

\newtheorem{corollary}{Corollary} 
\newtheorem{remark}{Remark} 
\newtheorem{lemma}{Lemma}

\newtheorem{main}{Theorem} 

\begin{document}
	
\title[Monodromic tangential singularities]{Hopf-like bifurcations induced by\\ hysteresis and time-delay near\\ monodromic tangential singularities}


\author[Douglas D. Novaes, Paulo Santana and David J. W. Simpson]{Douglas D. Novaes$^1$, Paulo Santana$^2$, and David J. W. Simpson$^3$}

\address{$^1$Instituto de Matemática, Estatística e Computação Científica, Universidade Estadual de Campinas, Campinas, 13083-859, Brazil.}
\email{ddnovaes@unicamp.br}

\address{$^2$Instituto de Biociências, Letras e Ciências Exatas, Universidade Estadual Paulista, São José do Rio Preto, 15054-000, Brazil.}
\email{paulo.santana@unesp.br}

\address{$^3$School of Mathematical and Computational Sciences, Massey University,	Palmerston North, 4410, New Zealand.}
\email{d.j.w.simpson@massey.ac.nz}

\subjclass[2020]{34C55, 34A36, 34C23}

\keywords{Hysteresis, time-delay, limit cycles, piecewise-smooth systems}

\begin{abstract}
	We investigate planar piecewise-analytic vector fields, focusing on the merged focus and other monodromic tangential singularities when hysteresis or time-delay is incorporated into the switching condition. If the singularity is an asymptotically stable solution of the system with an instantaneous switch, then the introduction of hysteresis or time-delay causes an attracting limit cycle to be formed locally. We derive asymptotic expressions for the size and period of the limit cycle, allowing any degrees of tangency and any order for the smallest non-zero Lyapunov coefficient. We find that the growth rate of the limit cycle differs for hysteresis and time-delay, and differs to that of the related pseudo-Hopf bifurcation.
\end{abstract}

\maketitle

\section{Introduction}

Many physical systems involve distinct modes of evolution.
Examples include mechanical systems with dry friction~\cite{FeGu98},
 climatic models~\cite{PaPa04},
and control systems with on/off switching strategies (most simply thermostats)~\cite{Ts84}. These give rise to dynamics characterized by phases of smooth evolution interrupted by switching events, and as such are \emph{piecewise-smooth dynamical systems}~\cite{Bristol}.

Piecewise-smooth systems are most simply modeled by assuming that the dynamics switch exactly when the switching threshold is reached. In \cite{FilippovBook},  by means of differential inclusion theory, Filippov introduced a convention for defining solutions of such systems, which assumes that the switch occurs instantaneously once trajectories reach the switching threshold. But in practice, and particularly for relay control systems, there is invaluably some lag between when the threshold is reached and the change in motion takes effect~\cite{TimeDelayBook}. Indeed, hysteresis loop and other memory effects are often incorporated into relay control strategies to reduce excessive switching, known as chattering. For example, a thermostat designed to hold the temperature at a level $T_0$, may refrigerate until the temperature drops to $T_0-\mu$ (where $\mu>0$ is a small constant),
and only turn the refrigeration back on when the temperature reaches $T_0+\mu$~\cite{HysteresisBook}. In general, Filippov models are often made more realistic by incorporating small hysteresis or delay.

As the parameters of a system are varied, its dynamics changes qualitatively
at critical values of the parameter, termed \emph{bifurcations}.
In particular, at a \emph{Hopf bifurcation} a stationary solution changes stability, giving rise to a limit cycle. For this to occur, the system needs to be smooth in a neighborhood of the stationary solution. For smooth systems, there are many generalizations and degenerated versions of the Hopf bifurcation, for instance if the associated Jacobian matrix is nilpotent or degenerate, see~\cite{CLJ} and the references therein.

For piecewise-smooth systems, several analogues of the Hopf bifurcation have also been identified. As far as we know, one of the earliest qualitative studies of such a bifurcation is due to Skryabin~\cite{Skr1978}, who computed the first two Lyapunov coefficients of a \emph{merged focus} in a two-dimensional Filippov system (see Figure~\ref{Fig16}). Another pioneering contribution is due to Filippov, who, in Chapter 4 of his book~\cite{FilippovBook}, computed several Lyapunov coefficients for the same type of singularity, referring to it as a \emph{sewed focus}. At such a singularity, the system has a quadratic tangency on each side of the switching line. Following the notation introduced by Novaes and Silva in \cite[Definition 1]{NovSil2021}, the merged or sewed focus corresponds to a {\it $(2k_L,2k_R)$-monodromic tangential singularity} (see Section~\ref{Sec2} for its characterization) in the lowest-order case, namely, $k_L=k_R=1$. Such singularities arise in many models (see, for instance, \cite[Chapter $8$, $\mathsection6$]{AndVitKha1966}) and have been studied extensively (see, for instance, \cite{CGP2001,EFPT2023,NovSil2021,NovSil2022,NovSil2025}). In the more general setting, that higher order tangencies are allowed, it was proved in \cites{NovSil2021,NovSil2022} that the associated half-return maps and evolution-time functions are analytic, and that the index of the first nonzero Lyapunov coefficient $V_i$, whenever it exists, is necessarily even.

\begin{figure}[ht]
	\begin{center}
		\begin{minipage}{4cm}
			\begin{center} 
				\begin{overpic}[height=4cm]{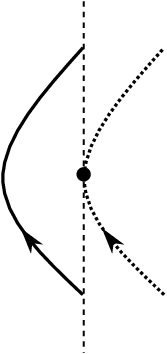} 
				\end{overpic}
				
				Left component.
			\end{center}
		\end{minipage}
		\begin{minipage}{4cm}
			\begin{center} 
				\begin{overpic}[height=4cm]{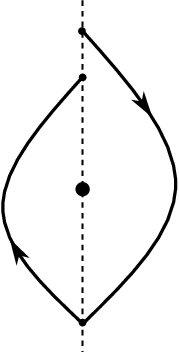} 
				\end{overpic}
				
				Merged focus.
			\end{center}
		\end{minipage}
		\begin{minipage}{4cm}
			\begin{center} 
				\begin{overpic}[height=4cm]{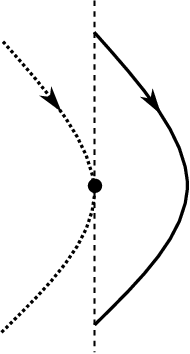} 
				\end{overpic}
				
				Right component
			\end{center}
		\end{minipage}
	\end{center}
	\caption{An illustration of a merged focus. The dashed lines are the switching manifolds. The solid lines are arcs of orbits of the smooth components, while the dotted ones are arcs of orbits not contained in the domain of its respective smooth component.}\label{Fig16}
\end{figure}

A limit cycle can be created by perturbing a system with a monodromic tangential singularity. For example, the perturbation may split the singularity into two distinct tangency points, one for each piece of the system. If the singularity is initially asymptotically stable,
and the split creates a repelling sliding region between the tangency points, then a local attracting limit cycle is born, see \cite[p.~$241$, item $b$]{FilippovBook}. This is known as a \emph{pseudo-Hopf bifurcation}~\cite{KRG2003}. To describe the asymptotics associated with the bifurcation, let $\mu>0$ denote the magnitude of the perturbation. Novaes and Silva, see \cite[Proposition~$4$]{NovSil2025} and~\cite[Corollary~$1$]{NovSil2022}, showed that for any $k_L, k_R \in \mathbb{R}$ the leading-order amplitude and period exponents of the limit cycle is of the form
\begin{equation}\label{eq:amplitude_period}
	\textnormal{amplitude}= c_1\mu^a+O(\mu^{2a}), \quad \textnormal{period}= c_2\mu^b+O(\mu^{2b}),
\end{equation}
with the leading-order amplitude and period exponents given by $a=b=1/(2n)$, where $V_{2n}$ is the first non-zero Lyapunov coefficient of the singularity. See also Arakaki {\em et al.}~\cite{AraNovSan2025} for a generalization to degenerated monodromic singularities, cusps, periodic orbits, and polycycles, for which the leading-order exponents may be negative or given by a logarithm.

By instead perturbing with hysteresis or time-delay, the literature is more scarce. In the lowest-order case $k_L=k_R=1$ with $V_2\neq0$, Li {\em et al.}~\cite{LiYuHan2013} and Kowalczyk~\cite{Kow2017} considered the addition of time-delay for certain classes of systems. This was extended to general piecewise-smooth systems in~\cite[Theorem~$11.4$]{Sim2022} where it was shown that the amplitude and period exponents are $a=b=1/2$ in the lowest-order case of $k_L=k_R=1$ with $V_2\neq0$. With instead hysteresis, Makarenkov~\cite{Mak2017} showed that the exponents are $a=b=1/3$ (still in the case $k_L=k_R=1$ and $V_2\neq0$). This result is notable because the cube-root asymptotics are unique among the $20$ Hopf-like bifurcations of piecewise-smooth systems cataloged in~\cites{Sim2018,Sim2022}.

In this paper we extend these results to arbitrary $(2k_L,2k_R)$-tangential monodromic singularities with  first non-vanishing Lyapunov coefficient $V_{2n}$ under hysteresis and time-delay perturbations. Table~\ref{Table1} summarizes our results on the leading-order terms of the amplitude and period, given by \eqref{eq:amplitude_period}, of the bifurcating limit cycle under hysteresis and time-delay perturbations. For the sake of comparison, the corresponding results for sliding perturbations are also included.
\begin{table}[h]
		\caption{The exponents of the leading-order terms of the amplitude and period, given by~\eqref{eq:amplitude_period}, of the limit cycle bifurcating from a $(2k_L,2k_R)$-monodromic tangential singularity with first non-vanishing Lyapunov coefficient $V_{2n}$ under sliding, hysteresis, and time-delay perturbations.}
		\label{Table1}		
	\begin{tabular}{lcc}
		\hline 
		Perturbation & $a$ & $b$ \\ 
		\hline \vspace{-0.1cm} \\
		Sliding & $\dfrac{1}{2n}$ & $\dfrac{1}{2n}$\vspace{0.2cm} \\
		Hysteresis & $\dfrac{1}{2(n+\max\{k_L,k_R\})-1}$ & $\dfrac{1}{2(n+\max\{k_L,k_R\})-1}$\vspace{0.2cm} \\ 
		Time-delay & $\dfrac{1}{2(n+|k_R-k_L|)}$ & $\dfrac{1}{2(n+|k_R-k_L|)}$ \vspace{0.2cm} \\
		\hline
	\end{tabular}
\end{table}

This reveals a surprising difference between how the limit cycle depends on the orders of the singularity. For hysteresis, $a$ and $b$ are small when one of $k_L$ and $k_R$ is large, while for time-delay, $a$ and $b$ are small when difference between $k_L$ and $k_R$ is large. In contrast, for sliding bifurcations, $a$ and $b$ are independent of $k_L$ and $k_R$. Explicit formulas for the coefficients $c_1$ and $c_2$ in~\eqref{eq:amplitude_period} will be also be provided in our main results.

The paper is organized as follows. In Section~\ref{Sec2}, we state our main results, Theorems~\ref{HystereticHLB} and~\ref{TimeDelayedHLB}, which provide the values of $a$, $b$, $c_1$, and $c_2$ in \eqref{eq:amplitude_period} for the hysteresis and time-delay cases, respectively. In Section~\ref{Sec3} we define and state some technical results regarding half-return maps and their associated evolution times. Proofs of Theorems~\ref{HystereticHLB} and~\ref{TimeDelayedHLB} are provided in Section~\ref{Sec4}, while proofs of the technical lemmas are given in Appendix~\ref{AppA}.

\section{Main results}\label{Sec2}

In this section, we formally introduce the main objects studied in this work, namely tangential monodromic singularities, Lyapunov coefficients, hysteretic systems, and time-delayed systems. We also state our main results concerning these topics.

\subsection{Monodromic tangential singularities}

For each $J\in\{L,R\}$, let $X^J=(f^J,g^J)$ be an analytic planar vector field defined in a neighborhood $U\subset\mathbb{R}^2$ of the origin,
\begin{equation}\label{1}
	\dot x=f^J(x,y), \qquad \dot y=g^J(x,y),
\end{equation}
where
\begin{equation}\label{2}
	f^J(x,y)=\sum_{i,j\ge0}a_{ij}^Jx^iy^j,\qquad
	g^J(x,y)=\sum_{i,j\ge0}b_{ij}^Jx^iy^j,
\end{equation}
with $a_{ij}^J,b_{ij}^J\in\mathbb{R}$.
We consider the piecewise analytic system
\begin{equation}\label{piecewisesystem}
	Z(x,y)=
		\begin{cases}
			X^L(x,y), & x>0,\\
			X^R(x,y), & x<0,
		\end{cases}
\end{equation}
with switching manifold $\Sigma=\{(x,y)\in U\colon x=0\}.$

Let $h(x,y)=x$, and notice $\Sigma=h^{-1}(\{0\})$. Let $\Phi^J(t,x,y)$ be the solution to $X^J$ satisfying $\Phi^J(0,x,y)=(x,y)$. Then
\begin{equation}\label{eq:hOfPhiJ}
	h\left(\Phi^J(t,x,y)\right) = h(x,y)+\sum_{i\geqslant1}\frac{(X^J)^i h(x,y)}{i!}t^i,
\end{equation}
where $(X^J)^i h(x,y)$ is the \emph{Lie derivative} of order $i$ of $h$
in the direction of $X^J$ evaluated at $(x,y)$. The first Lie derivative is given by
\[
	X^J h = \left<\nabla h,X^J \right>,
\]
with $\left<\cdot,\cdot\right>$ denoting the standard inner product on $\mathbb{R}^2$. Higher order Lie derivatives can be evaluated iteratively via
\[
	(X^J)^m h = \left<\nabla (X^J)^{m-1} h, X^J \right>.
\]

Given $m\geqslant1$, we say that $p=(x,y)\in\Sigma$ has
\emph{contact of multiplicity} $m$ with $X^J$ if $t=0$ is a root of multiplicity $m$ of $h\left( \Phi^J(t,p) \right)$. In view of~\eqref{eq:hOfPhiJ}, this is equivalent to 
\begin{equation}\label{eq:XJhp}
	X^Jh(p)=(X^J)^2h(p)=\ldots=(X^J)^{m-1}h(p)=0, \quad (X^J)^mh(p)\neq0.
\end{equation}
Odd contact multiplicities do not yield monodromic behavior or local limit cycles, so we only treat even contact multiplicities, which can be divided into two types. We say that a contact point $p$ of multiplicity $2k_R$ (resp.~$2k_L$) of $X^R$ (resp.~$X^L$) is \emph{invisible} if $(X^R)^{2k_R}h(p)<0$ (resp.~$(X^L)^{2k_L}h(p)>0$), and \emph{visible} otherwise, see Figure~\ref{Fig2}.
\begin{figure}[ht]
	\begin{center}
		\begin{minipage}{6cm}
			\begin{center} 
				\begin{overpic}[height=5cm]{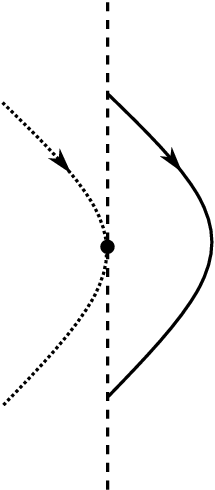} 
					\put(24.5,49){$p$}
					\put(24.5,98){$y=0$}				
				\end{overpic}
				
				Invisible
			\end{center}
		\end{minipage}
		\begin{minipage}{6cm}
			\begin{center} 
				\begin{overpic}[height=5cm]{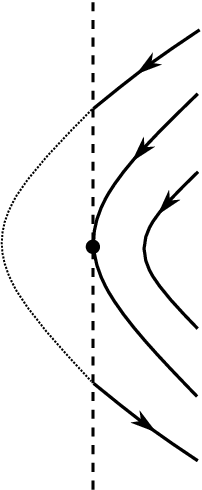} 
					\put(13,49){$p$}
					\put(21,98){$y=0$}
				\end{overpic}
				
				Visible
			\end{center}
		\end{minipage}
	\end{center}
\caption{An illustration of invisible and visible contact points of $X^R$.
For an invisible contact point, the orbit through $p$ intersects $\Sigma^R=\{(x,y)\in U\colon x\geqslant0\}$
only at $p$ (and thus is `invisible'). For a visible contact point, the orbit is contained in $\Sigma^R$ (and thus is `visible').}\label{Fig2}
\end{figure}

Following the notation introduced in \cite[Definition 1]{NovSil2021}, a tangential point $p\in\Sigma$ is called a {\it $(2k_L,2k_R)$-monodromic tangential singularity} of $Z$ if it is an invisible contact point for both vector fields $X^L$ and $X^R$, of multiplicities $2k_L$ and $2k_R$, respectively, and if $Z$ admits a first-return map defined in a neighborhood of $p$. The latter condition means that $Z$ is monodromic around $p$ and, in this case, is equivalent to requiring that $X^L(p)$ and $X^R(p)$ be anti-collinear, that is, $X^L(p)=\lambda X^R(p),$ for some $\lambda<0.$ This ensures that the orientations of the trajectories of $Z^+$ and $Z^-$ near $p$ are compatible, so that trajectories can be continued across $\Sigma$ by alternating the flows of $X^L$ and $X^R$.

In the particular case where $p=(0,0)$, the definition of a $(2k_L,2k_R)$-monodromic tangential singularity for system \eqref{piecewisesystem} can be readily reformulated as the following characterization:
\begin{lemma}\label{LC}
	The origin is a $(2k_L,2k_R)$-monodromic tangential singularity of the piecewise analytic system \eqref{piecewisesystem} if and only if
	\begin{enumerate}[label=(\roman*)]
		\item $a_{0,i}^J=0$ for all $i\in\{0,\dots,2k_J-2\}$ and $J \in \{ L, R \}$,
		\item $b_{00}^R a_{0,2k_R-1}^R<0$, $b_{00}^L a_{0,2k_L-1}^L>0$, and
		\item $b_{00}^Lb_{00}^R<0$.
	\end{enumerate}
\end{lemma}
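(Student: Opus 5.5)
The plan is to compute the contact data of each $X^J$ at the origin directly in terms of the Taylor coefficients~\eqref{2}, and then translate each clause of the definition. This uses the reformulation stated just before the lemma: monodromy at a double invisible tangency is equivalent to anti-collinearity of $X^L(0,0)$ and $X^R(0,0)$.

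The key computation is the following claim. Suppose $b_{00}^J\neq 0$ and let $j\ge 0$ be the smallest index with $a_{0,j}^J\neq0$. Then the origin has contact of multiplicity exactly $j+1$ with $X^J$, and
\[
(X^J)^{j+1}h(0,0)=j!\,a^J_{0,j}\,(b^J_{00})^{j}.
\]
To prove it, I would avoid iterating Lie derivatives symbolically and work instead with $h(\Phi^J(t,0,0))=x(t)$, using~\eqref{eq:hOfPhiJ}. Write $(x(t),y(t))=\Phi^J(t,0,0)$. Then $y(t)=b_{00}^Jt+O(t^2)$ and
\[
\dot x=f^J(x,y)=f^J(0,y)+O(|x|), \qquad f^J(0,y)=a^J_{0,j}y^j+O(y^{j+1}).
\]
Since $f^J(0,y(t))=a^J_{0,j}(b^J_{00})^jt^j+O(t^{j+1})$, a bootstrap applies: if $x(t)=O(t^m)$, then $\dot x=O(t^{\min(j,m)})$, so $x(t)=O(t^{\min(j+1,m+1)})$. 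Starting from $x(t)=O(t)$, after finitely many steps we get $x(t)=O(t^{j+1})$. Feeding this back once more gives
\[
x(t)=\frac{a^J_{0,j}(b^J_{00})^j}{j+1}\,t^{j+1}+O(t^{j+2}).
\]
Comparing with~\eqref{eq:hOfPhiJ} yields the claim. In particular, when $j=2k_J-1$ is odd, $(b^J_{00})^{2k_J-1}$ has the sign of $b^J_{00}$. Hence the sign of $(X^J)^{2k_J}h(0,0)$ equals the sign of $b^J_{00}a^J_{0,2k_J-1}$.

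For the direction ($\Rightarrow$), assume the origin is a $(2k_L,2k_R)$-monodromic tangential singularity.
\begin{itemize}
\item Contact of multiplicity at least $2$ gives $X^Jh(0,0)=a^J_{00}=0$, so $X^J(0,0)=(0,b^J_{00})$.
\item Anti-collinearity requires $X^J(0,0)\neq0$, hence $b^J_{00}\ne0$. It also gives $(0,b^L_{00})=\lambda(0,b^R_{00})$ with $\lambda<0$, which is (iii).
\item By the claim, the multiplicity equals $j+1$, where $j$ is the first index with $a^J_{0,j}\neq0$. Multiplicity $2k_J$ therefore forces $j=2k_J-1$, which gives (i).
\item The invisibility signs $(X^R)^{2k_R}h<0$ and $(X^L)^{2k_L}h>0$ become (ii) via the sign identity above.
\end{itemize}
If $f^J(0,\cdot)\equiv0$, the contact would be of infinite order; this is excluded by the finite multiplicity $2k_J$.

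For the direction ($\Leftarrow$), I would run the same steps backwards. By (iii), $b^J_{00}\neq0$. By (i) and (ii), $a^J_{0,2k_J-1}\neq0$ is the first nonzero coefficient $a^J_{0,j}$. The claim then gives multiplicity $2k_J$ with the correct invisibility signs. Finally, $X^L(0,0)$ and $X^R(0,0)$ are anti-collinear by (iii), so monodromy holds.

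The only step that is not pure bookkeeping is the bootstrap, where the $O(|x|)$ term in $f^J$ must be shown not to contaminate the order $t^{j}$ term of $\dot x$. It is handled by the iteration above, because each pass improves the order of $x(t)$ by one until it saturates at $j+1$.
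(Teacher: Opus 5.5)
Your proof is correct and follows the same clause-by-clause correspondence the paper gives when it omits the proof as a ``straightforward computation of the Lie derivatives''. The one difference is how you evaluate $(X^J)^{j+1}h(0,0)=j!\,a^J_{0,j}(b^J_{00})^j$: you take the time expansion of $h(\Phi^J(t,0,0))$ and bootstrap on $\dot x=f^J(x,y)$, rather than iterating the Lie-derivative recursion, and the two are equivalent by \eqref{eq:hOfPhiJ}. A small tightening: $b^J_{00}\neq0$ already follows from the finite multiplicity $2k_J$, since $X^J(0,0)=0$ would make $h(\Phi^J(t,0,0))\equiv0$, so you need not read ``anti-collinear'' as excluding zero vectors.
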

The proof of Lemma~\ref{LC} follows from a straightforward computation of the Lie derivatives and is therefore omitted. Indeed, condition~(i) is equivalent to the first part of~\eqref{eq:XJhp}, condition~(ii) ensures that the origin is an invisible contact point for both $X^L$ and $X^R$, and condition~(iii) corresponds to the anti-collinearity of $X^L(0)$ and $X^R(0)$.

\subsection{Lyapunov coefficients}

Write $\Phi^J(t,x,y)=\big(\varphi^J(t,x,y),\psi^J(t,x,y)\big)$ for the components of $\Phi^J$. It was proved in~\cite{NovSil2021} that, if the origin is an invisible contact point of $X^J$, then there exists $\varepsilon>0$ such that the half-return map $P_0^J:(-\varepsilon,\varepsilon)\to\mathbb{R}$ is well defined on $\Sigma$ near the origin, describing the evolution of trajectories from and back to the switching manifold. More precisely, there exists a unique function $T_0^J:(-\varepsilon,\varepsilon)\to\mathbb{R}$ with $T_0^J(0)=0$ satisfying
\begin{equation}\label{50}
	\varphi^J(t,0,y)=0 \iff t=T_0^J(y),
\end{equation}
so that
\begin{equation}\label{51}
	P_0^J(y)=\psi^J\big(T_0^J(y),0,y\big).
\end{equation}
Moreover, $P_0^J$ is analytic and admits an expansion of the form
\begin{equation}
	P_0^J(y)=-y+\sum_{i\geqslant2}\alpha_i^J y^i.
\end{equation}
It was further shown in~\cite{NovSil2022} that $T_0^J$ is analytic and has an expansion of the form
\begin{equation}\label{49T}
	T_0^J(y)=-\frac{2}{b_{00}^J}y+\sum_{i\geqslant2}\gamma_i^J y^i.
\end{equation}
Recursive formulae for computing the coefficients $\alpha_i^J,\gamma_i^J\in\mathbb{R}$ are provided in~\cite{NovSil2021,NovSil2022}.

Now suppose the origin is a monodromic tangential singularity of $Z$.
Let $\delta = -1$ if solutions revolve clockwise,
and $\delta = 1$ if solutions revolve counterclockwise; equivalently
\begin{equation}\label{eq:delta}
	\delta = \operatorname{sgn} b^R_{00} \,.
\end{equation}

In order to study pseudo-Hopf bifurcations, the authors in~\cite{NovSil2021,NovSil2025} considered the displacement map
\begin{equation}\label{13}
	D_0(y):=\delta\big(P^R_0(y)-P^L_0(y)\big)=\sum_{i\geqslant2}V_iy^i,
\end{equation}
with $V_i=\delta(\alpha_i^R-\alpha_i^L)$ known as the $i$-th \emph{Lyapunov coefficient}. The factor $\delta$ in~\eqref{13} is included so that $D_0(y)>0$ if the origin is repelling, and $D_0(y)<0$ if the origin is attracting, see Figure~\ref{Fig9}. 
\begin{figure}[ht]
	\begin{center}
		\begin{minipage}{6cm}
			\begin{center} 
				\begin{overpic}[height=5cm]{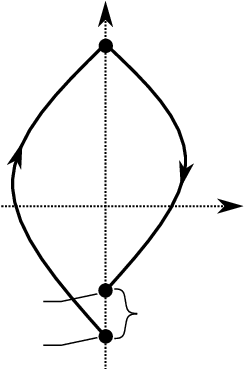} 
					\put(63,39){$x$}
					\put(31,98){$y$}
					\put(23,88){$p$}
					\put(3.5,5.5){$q_L$}
					\put(3.5,17.5){$q_R$}
					\put(37.5,12.5){$D_0(p)<0$}
				\end{overpic}
				
				$(a)$ $\delta=-1$.
			\end{center}
		\end{minipage}
		\begin{minipage}{6cm}
			\begin{center} 
				\begin{overpic}[height=5cm]{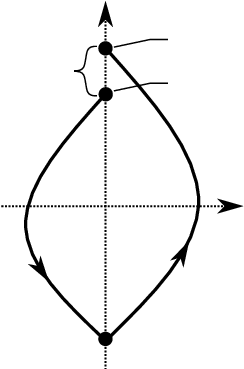} 
					\put(63,39){$x$}
					\put(31,98){$y$}
					\put(23,3.5){$p$}
					\put(46.5,76){$q_L$}
					\put(46.5,88){$q_R$}
					\put(-12,78.75){$D_0(p)>0$}
				\end{overpic}
				
				$(b)$ $\delta=1$.
			\end{center}
		\end{minipage}
	\end{center}
\caption{An illustration of the displacement map $D_0$ for a point $p = (0,y)$, where $q_L = \left( 0, P^L_0(y) \right)$ and $q_R = \left( 0, P^R_0(y) \right)$ are the two return points. Notice that with either $y > 0$ and $\delta = -1$ (clockwise motion in (a)) or $y < 0$ and $\delta = 1$ (counterclockwise motion in (b)), the evolution time $T^L_0(y)$ is negative
and the evolution time $T^R_0(y)$ is positive.}\label{Fig9}
\end{figure}
We remark from~\cite[Theorem~$B$]{NovSil2021} that if $V_i\neq0$ for some $i\geqslant2$, then the minimal such index $i$ is even. From~\cite[Corollary~$1$]{NovSil2021}, $V_2=\delta(\alpha_2^R-\alpha_2^L)$ where
\begin{equation}\label{eq:alpha2J}
	\alpha_2^J=\frac{2}{2k_J+1}\left(\frac{a_{10}^J}{b_{00}^J}+\frac{b_{01}^J}{b_{00}^J}-\frac{a_{0,2k_J}^J}{a_{0,2k_J-1}^J}\right),
\end{equation}
for each $J\in\{L,R\}$, and all other Lyapunov coefficients can be evaluated via the algorithm given in~\cite[Theorem~$C$]{NovSil2021}.

\subsection{Hysteretic Hopf-like bifurcation}

Given $\mu>0$, we associate to $Z$ the \emph{hysteretic system}~$Z_h$ defined as follows. First, let
\[
	\Sigma^L_\mu=\{(x,y)\in\mathbb{R}^2\colon x\leqslant-\mu\}, \quad \Sigma^R_\mu=\{(x,y)\in\mathbb{R}^2\colon x\geqslant\mu\}.
\]
The forward orbit of a point $p_1\in\Sigma^R_\mu$ is defined by $X^R$ until the orbit reaches the line $x=-\mu$, at a point $p_2$. At this instant the system switches and the orbit is governed by $X^L$ until it reaches $x=\mu$ at a point $p_3$, where it switches again to $X^R$. This is illustrated in Figure~\ref{Fig1} in the case of a point $p_1$ on the line $x=\mu$. 
\begin{figure}[ht]
	\begin{center}
		\begin{minipage}{6cm}
			\begin{center} 
				\begin{overpic}[width=4.5cm]{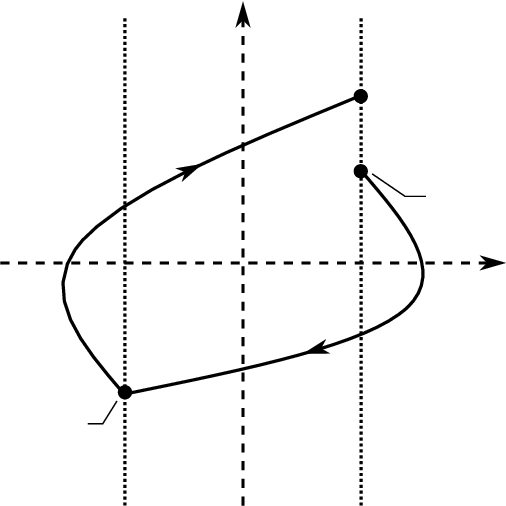} 
					\put(85,60){$p_1$}
					\put(9,15){$p_2$}
					\put(74,80){$p_3$}
					\put(97,42){$x$}
					\put(50,98){$y$}
					\put(4,95){{\tiny$x=-\mu$}}
					\put(73,95){{\tiny$x=\mu$}}
				\end{overpic}
				
				$\delta=-1$.
			\end{center}
		\end{minipage}
		\begin{minipage}{6cm}
			\begin{center} 
				\begin{overpic}[width=4.5cm]{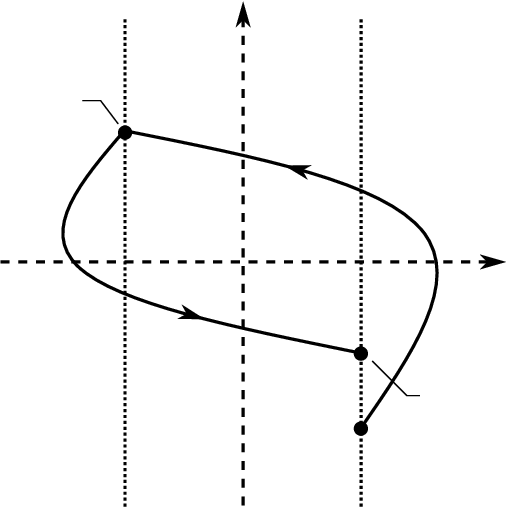} 
					\put(74,13){$p_1$}
					\put(8,79){$p_2$}
					\put(84,20.5){$p_3$}
					\put(97,43){$x$}
					\put(50,98){$y$}
					\put(4,95){{\tiny$x=-\mu$}}
					\put(73,95){{\tiny$x=\mu$}}
				\end{overpic}
				
				$\delta=1$.
			\end{center}
		\end{minipage}
	\end{center}
\caption{Parts of typical orbits of hysteretic systems having a
$(2,2)$-monodromic tangential singularity at the origin.}\label{Fig1}
\end{figure}
If we start with $p_1\in\Sigma^L_\mu$, the dynamics follows similarly but with starting motion given by $X^L$. Any other initial point belongs to the set
\[
	\Sigma^N:=\mathbb{R}^2\setminus\big(\Sigma^R_\mu\cup\Sigma^L_\mu\big)=\{(x,y)\in\mathbb{R}^2\colon -\mu<x<\mu\},
\]
from which forward motion is not uniquely determined and depends on the preceding states. The set $\Sigma^N$ is referred to as the \emph{non-unique zone} by Andronov \emph{et al.}~\cite{AndVitKha1966}.

For simplicity, we write the hysteretic system $Z_h$ as
\begin{equation}\label{3}
	\left(\begin{array}{c}
		\dot x \\
		\dot y 
	\end{array}\right)
	=
	\left\{\begin{array}{ll}
		X^L(x,y) & \text{until } x=\mu, \\
		X^R(x,y) & \text{until } x=-\mu.
	\end{array}\right.		
\end{equation}
Notice that as $\mu\to0^+$ the dynamics of $Z_h$ collapses to the dynamics of $Z$, provided no sliding segments appear.

Now suppose $Z$ has a $(2k_L,2k_R)$-monodromic tangential singularity at the origin, and let
\begin{equation}\label{57}
	\eta^J_h=\frac{2b_{00}^J}{a_{0,2k_J-1}^J}, 
	\quad
	\kappa_h=\left\{\begin{array}{ll}
				-\eta^L_h, &\text{if } k_L>k_R, 
				\vspace{0.1cm} \\
				\eta^R_h-\eta^L_h, &\text{if } k_L=k_R, 
				\vspace{0.1cm} \\
				\eta^R_h, &\text{if } k_L<k_R.
	\end{array}\right.
\end{equation}
We remark from Lemma~\ref{LC} that $\eta^R_h<0$, $\eta^L_h>0$, and thus $\kappa_h<0$. Also let $k_M=\max\{k_L,k_R\}$, and
\begin{equation*}
	S_h^{\mu,\varepsilon}=\left\{(x,y)\in\{\mu\}\times(-\varepsilon,\varepsilon)\colon |y|^{2k_M+\frac{1}{2}}>\mu, \, \delta y<0\right\},
\end{equation*}
shown bellow in Figure~\ref{Fig17}. The bifurcation of a limit cycle from a $(2k_L,2k_R)$-monodromic tangential singularity for the hysteretic system~\eqref{3} was studied in~\cite[Theorem~$2.9$]{Mak2017} and~\cite[Theorem~$11.3$]{Sim2022} in the simplest case that $k_L=k_R=1$ and $V_2\neq0$. The following theorem allows any $k_L,k_R\geqslant1$, and any $n \geqslant 1$ such that $V_{2n}$ is the first non-zero Lyapunov coefficient.

\begin{main}\label{HystereticHLB}
	Suppose that the origin is a $(2k_L,2k_R)$-monodromic tangential singularity and that there is a minimal $n\in\mathbb{N}$ such that $V_{2n}\neq0$. If $V_{2n}<0$ (resp.~$V_{2n}>0$), then there exist $\varepsilon,\overline\mu>0$ such that, for every $\mu\in (0,\overline\mu)$, the hysteretic system \eqref{3} has a unique periodic orbit (resp.~no periodic orbit) intersecting $S_h^{\mu,\varepsilon}$. In the case that a periodic orbit exists, it is a hyperbolic attracting limit cycle. Moreover, letting $\big(\mu,-\delta\mathcal{P}_h(\mu)\big)$ denote its intersection with~$S_h^{\mu,\varepsilon}$ and $\mathcal{T}_h(\mu)$ its period, 
\begin{align}
		\mathcal{P}_h(\mu) &= \left|\frac{\kappa_h}{V_{2n}}\right|^{1/N} \mu^{1/N}+O\big(\mu^{2/N}\big), \label{45}
		\vspace{0.2cm} \\
		\mathcal{T}_h(\mu) &= \left|\frac{2}{b_{00}^L}-\frac{2}{b_{00}^R}\right| \left|\frac{\kappa_h}{V_{2n}}\right|^{1/N} \mu^{1/N}+O\big(\mu^{2/N}\big), \label{46}
	\end{align}
	where $N=2(n+k_M)-1$.
\end{main}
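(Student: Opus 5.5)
The plan is to reduce the dynamics to a one-dimensional displacement function on the line $x=\mu$, show that the hysteresis shifts the unperturbed displacement $D_0$ by a singular term of order $\mu/|y|^{2k_M-1}$, and then locate the zero of the perturbed displacement by rescaling.

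\emph{Perturbed half-return maps.} For $J\in\{L,R\}$, consider the orbit of $X^J$ through a point of $\Sigma$ near the origin. Near the invisible tangency of multiplicity $2k_J$ it is well approximated by the level curves of
$$x\approx \frac{a^J_{0,2k_J-1}}{2k_J\, b^J_{00}}\big(y^{2k_J}-y_0^{2k_J}\big),$$
because $\dot y\approx b^J_{00}$ and $\dot x\approx a^J_{0,2k_J-1}y^{2k_J-1}$ by Lemma~\ref{LC}. Accordingly, I would define the perturbed half-return maps
\begin{itemize}
\item $P^R_\mu$, from $\{x=\mu\}$ to $\{x=-\mu\}$ along $X^R$;
\item $P^L_\mu$, from $\{x=-\mu\}$ to $\{x=\mu\}$ along $X^L$ (taken backward, so that both maps act on the same initial point, as for $D_0$).
\end{itemize}
The first technical lemma I would prove is the expansion
$$P^J_\mu(y)=P^J_0(y)+\sigma_J\,\frac{\eta^J_h\,\mu}{y^{2k_J-1}}\big(1+O(|y|)\big)+O\!\left(\frac{\mu^2}{|y|^{4k_J-1}}\right),$$
valid uniformly on the region $|y|^{2k_M+1/2}>\mu$, where $\sigma_J=\pm1$ are fixed signs. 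Heuristically, moving both endpoints off $\Sigma$ by $\pm\mu$ changes $y^{2k_J}$ by $O(\mu)$, and dividing by $2k_J y^{2k_J-1}$ gives the displayed correction with constant $2b^J_{00}/a^J_{0,2k_J-1}=\eta^J_h$.

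To prove this rigorously, I would use the analytic functions $T^J_0$ and $P^J_0$ from Section~3. The point $(\pm\mu,\cdot)$ on the orbit through $(0,y_0)$ is found by solving $\varphi^J(t,0,y_0)=\pm\mu$ near $t=0$ and near $t=T^J_0(y_0)$. At these times $\partial_t\varphi^J$ has size $\sim|y_0|^{2k_J-1}$, so the implicit function theorem applies after rescaling $t=y_0 s$ and $\mu=|y_0|^{2k_J}\nu$ with $\nu$ small. The restriction to $S_h^{\mu,\varepsilon}$ guarantees $\nu\le|y_0|^{1/2}\to0$. The same argument gives the evolution times
$$T^J_\mu(y)=T^J_0(y)+O\!\left(\frac{\mu}{|y|^{2k_J-1}}\right).$$

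\emph{Displacement and rescaling.} With $D_\mu(y)=\delta\big(P^R_\mu(y)-P^L_\mu(y)\big)$, the expansion gives, for $\delta y<0$,
$$|D_\mu(y)|\text{-equation: } V_{2n}y^{2n}+\kappa_h\,\mu\,|y|^{-(2k_M-1)}+\text{h.o.t.}=0$$
up to an overall sign. Only the larger of $k_L,k_R$ survives at leading order, and when $k_L=k_R$ both contributions add, which is exactly the case distinction in~\eqref{57}. The signs $\sigma_J$ and the factor $\delta$ must be checked to confirm that the two corrections combine as $\eta^R_h-\eta^L_h$ rather than as a sum; this sign bookkeeping is the tedious part.

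I would then set $y=-\delta\,\mu^{1/N}u$ with $N=2(n+k_M)-1$. After dividing by $\mu^{2n/N}$ the equation becomes
$$V_{2n}u^{2n}+\kappa_h u^{-(2k_M-1)}+O(\mu^{1/N})=0, \qquad u>0.$$
Since $\kappa_h<0$, this has a positive root exactly when $V_{2n}<0$, namely $u_*=|\kappa_h/V_{2n}|^{1/N}$. The root is simple, so the implicit function theorem in $(u,\mu^{1/N})$ gives a unique zero $u=u_*+O(\mu^{1/N})$, which yields~\eqref{45}.

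For uniqueness on all of $S_h^{\mu,\varepsilon}$, and nonexistence when $V_{2n}>0$, I would argue by monotonicity. On $S_h^{\mu,\varepsilon}$ both dominant terms have derivatives of fixed sign in $|y|$, so $D_\mu$ is strictly monotone there. Hyperbolicity and attraction follow from the sign of $D_\mu'$ at the zero, which equals $\mu^{(2n-1)/N}$ times a nonzero constant; this gives a return-map derivative strictly less than $1$.

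\emph{Period and main obstacle.} The period is $\mathcal{T}_h=T^R_\mu(y)-T^L_\mu(y)$. Using~\eqref{49T}, the linear terms contribute $\big|2/b^L_{00}-2/b^R_{00}\big|\,|y|$. The corrections $O(\mu/|y|^{2k_J-1})$ are $O(\mu^{2/N})$ at $|y|\sim\mu^{1/N}$, because $(2k_J-1)/N\le 1-2/N$; this gives~\eqref{46}.

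I expect the main obstacle to be the first lemma: obtaining the perturbed half-return expansion with remainder uniformly controlled down to $|y|\sim\mu^{1/(2k_M+1/2)}$, including the $O(|y|)$ relative correction. That control is what makes the error in~\eqref{45} $O(\mu^{2/N})$ and not merely $o(\mu^{1/N})$. I would first try the blow-up $t=y_0s$, $x=y_0^{2k_J}\xi$, in which the flow becomes a regular analytic perturbation of the model $\xi=\tfrac{a}{2kb}\big((1+bs)^{2k}-1\big)$.
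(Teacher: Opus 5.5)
Your strategy is essentially the paper's. You expand the half-return maps with a correction $\eta^J_h\mu/y^{2k_J-1}$ (Lemma~\ref{L1}, Corollary~\ref{C1}), rescale $y\sim\mu^{1/N}$, use a simple root plus the implicit function theorem, and read hyperbolicity off the sign of $\partial_y D$.

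\textbf{The gap.} The one step that decides the theorem's dichotomy is the sign of the hysteresis correction relative to $V_{2n}y^{2n}$. You leave it open (the undetermined $\sigma_J$, ``must be checked''). The rescaled equation you then write down has the wrong \emph{relative} sign. With $\kappa_h<0$, the equation $V_{2n}u^{2n}+\kappa_h u^{-(2k_M-1)}=0$ has a positive root only when $V_{2n}>0$. That is the opposite of the conclusion you draw, and ``up to an overall sign'' cannot repair a relative sign. Existence versus nonexistence, attraction, and your monotonicity argument all hinge on this sign, so as written the proposal does not establish the theorem.

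\textbf{The fix} comes from your own level-curve heuristic. Abbreviate $a^J=a^J_{0,2k_J-1}$ and $b^J=b^J_{00}$, and take the curve $x\approx\frac{a^J}{2k_Jb^J}(y^{2k_J}-y_0^{2k_J})$.
\begin{enumerate}
\item The point $y_1$ on $x=\mu$ and the point $y_2$ on $x=-\mu$ satisfy $y_2^{2k_J}-y_1^{2k_J}=-4k_J\mu b^J/a^J$, so $|y_2|\approx|y_1|-\eta^J_h\mu/|y_1|^{2k_J-1}$ for both $J$.
\item The relevant endpoint (near $t=T^J_0$) lies on the opposite side of $y=0$, so $P^J_\mu(y)=P^J_0(y)+\eta^J_h\mu\,y^{-(2k_J-1)}(1+O(y))+\cdots$. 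That is, $\sigma_J=+1$, which is \eqref{HalfPoincare}.
\item Hence $D_\mu(y)=D_0(y)+\delta\kappa_h\mu\,y^{-(2k_M-1)}(1+O(y))+\cdots$.
\item With $y=-\delta\mu^{1/N}u$ you get $y^{2k_M-1}=-\delta\mu^{(2k_M-1)/N}u^{2k_M-1}$, and $1-(2k_M-1)/N=2n/N$.
\end{enumerate}
The correct rescaled equation is therefore $V_{2n}u^{2n}+|\kappa_h|\,u^{-(2k_M-1)}+O(\mu^{1/N})=0$. The hysteresis term is always positive, i.e.\ repelling (cf.\ Remark~\ref{rem:onlyattracting}), so a positive root exists iff $V_{2n}<0$.

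\textbf{Consequences once the sign is fixed.}
\begin{itemize}
\item For $V_{2n}<0$ your monotonicity idea works. Both terms decrease in $u$. Equivalently, $2nV_{2n}y^{2n-1}$ and $-(2k_M-1)\delta\kappa_h\mu y^{-2k_M}$ both have sign $\delta$.
\item The remainders' derivatives are controlled uniformly on $S_h^{\mu,\varepsilon}$, because the correction is analytic in $(\mu/y^{2k_J},y)$ and $\mu/|y|^{2k_M}<|y|^{1/2}$ there. This gives uniqueness on all of $S_h^{\mu,\varepsilon}$. The paper's implicit-function argument, being local near $z_0$ in the blown-up variable, does not literally give this.
\item For $V_{2n}>0$ the two terms have opposite monotonicity. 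Nonexistence must therefore come from both terms being positive, not from monotonicity.
\end{itemize}
Your remainder $O(\mu^2/|y|^{4k_J-1})$ is the accurate form of the paper's ``$O(\mu^2)$''.
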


\begin{remark}\label{rem:onlyattracting}
In contrast to the pseudo-Hopf bifurcation, where a repelling limit cycle may bifurcate when $V_{2n}>0$, Theorem~\ref{HystereticHLB} shows that, in the hysteretic case, only an attracting limit cycle can arise. This occurs because the hysteresis effectively converts the discontinuity line into a repelling strip that mimics the effect of an unstable sliding set (also called escaping region in some works). By contrast, in the pseudo-Hopf case, one may also obtain a stable sliding region, which is responsible for the bifurcation of a repelling limit cycle when $V_{2n}>0$. Such a mechanism is absent in the hysteretic case.
\end{remark}

As a simple illustration of Theorem~\ref{HystereticHLB}, suppose
\begin{equation}\label{Example}
	X^L(x,y)=(y-x,1), \quad X^R=(y^3,-1-y).
\end{equation}
In this case, $a^L_{01}=1$, $a^L_{10}=-1$, $b^L_{00}=1$, $a^R_{03}=1$, $b^R_{00}=-1$, $b^R_{01}=-1$, and all other coefficients are zero. Thus, by Lemma~\ref{LC}, the origin is a $(2,4)$-monodromic tangential singularity. By evaluating~\eqref{eq:alpha2J}, we obtain $\alpha^L_2=-\frac{2}{3}$ and $\alpha^R_2=\frac{2}{5}$. Also $\delta=-1$ by~\eqref{eq:delta}, so $V_2=-\frac{16}{15}$. Thus we can apply Theorem~\ref{HystereticHLB} with $k_L=1$, $k_R=2$, and $n=1$. The theorem shows there exists a unique attracting limit cycle for sufficiently small $\mu>0$ with amplitude and period asymptotically proportional to $\mu^\frac{1}{N}$, where $N=5$.

The solid curve in Figure \ref{Fig14}(a) shows the position of the limit cycle as a function of $\mu$, obtained by numeric calculations. The dashed curve shows the leading order term of~\eqref{45},
and as expected the two curves agree as $\mu\to0$. Figure~\ref{Fig14}(b) shows the limit cycle in phase space for $\mu=0.004$.
\begin{figure}[ht]
	\begin{center}
		\begin{minipage}{6cm}
			\begin{center} 
				\begin{overpic}[height=5.5cm]{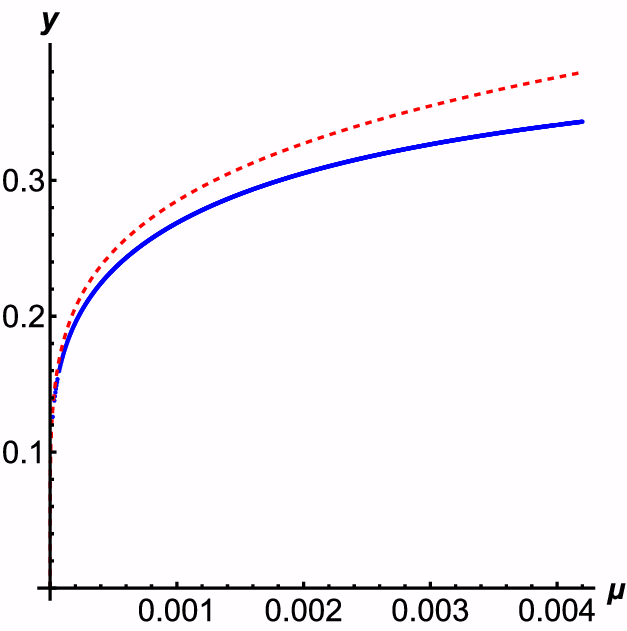} 
				\end{overpic}
				
				$(a)$
			\end{center}
		\end{minipage}
		\begin{minipage}{6cm}
			\begin{center} 
				\begin{overpic}[height=5.5cm]{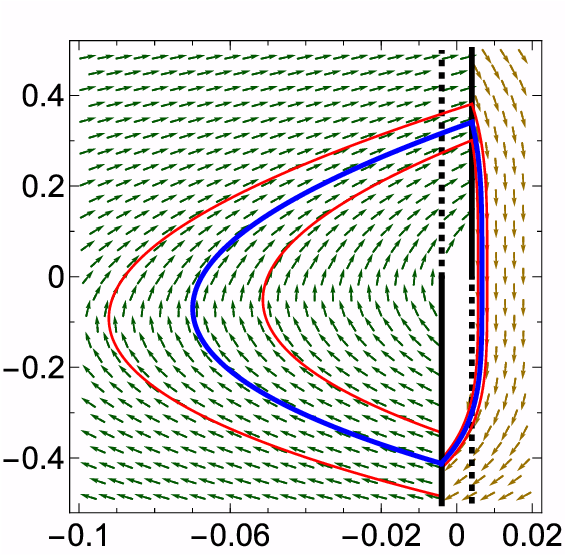} 
				\end{overpic}
				
				$(b)$
			\end{center}
		\end{minipage}
	\end{center}
\caption{A numerical verification of Theorem~\ref{HystereticHLB} for the hysteretic system~\eqref{3} with~\eqref{Example}. The solid curve in~(a) is the position of the limit cycle (computed numerically), while the dashed curve is leading order term in~\eqref{45}. In (b) we show a phase portrait for $\mu=0.004$. The blue loop is the limit cycle, the red curves are typical forward orbits, and the black lines are the switching lines $x=\pm\mu$.}\label{Fig14}
\end{figure}

\subsection{Time-delayed Hopf-like bifurcation}

Given $\mu>0$, we introduce the time-delayed system~$Z_{td}$:
\begin{equation}\label{3x}
	\left(\begin{array}{c}
		\dot x(t) 
		\vspace{0.1cm} \\
		\dot y(t) 
	\end{array}\right)
	=
	\left\{\begin{array}{ll}
		X^L\big(x(t),y(t)\big) & \text{while } x(t-\mu)<0, 
		\vspace{0.1cm} \\
		X^R\big(x(t),y(t)\big) & \text{while } x(t-\mu)>0.
	\end{array}\right.		
\end{equation}
Delay differential equations are infinite-dimensional, but in~\eqref{3x} the time delay~$\mu$ appears only in the switching condition. Consequently, if the time between switches is greater than~$\mu$, then the motion is essentially two-dimensional and can be characterized with one or more one-dimensional return maps.

To clarify, consider an orbit $\Phi(t)=(x(t),y(t))$ of~\eqref{3x}
that at a time $t_1\in\mathbb{R}$ is located at $p_1=\Phi(t_1)=(0,y_1)\in \Sigma$, and for which $\Phi(t)\in\Sigma^L_0$ for all $t_1-\mu<t<t_1$. The orbit follows $X^L$ until arriving at $p_2=\Phi(t_2)$, where $t_2=t_1+\mu$. Suppose $\Phi(t)\in\Sigma^R_0$ for all $t_1<t< t_2$, which in our setting typically occurs when $\delta y_1<0$. Then at $t=t_2$ the orbit switches to following $X^R$, as in Figure~\ref{Fig10}.

Furthermore, suppose the orbit returns to $\Sigma$, and let $t_3>t_2$ be the first time at which this occurs. Then the orbit follows $X^R$ through $p_3=\Phi(t_3)$ until arriving at $p_4=\Phi(t_4)$, where $t_4=t_3+\mu$. If $\Phi(t)\in\Sigma^L_0$ for all $t_3<t<t_4$, the orbit switches to following $X^L$ at $t=t_4$. The orbit may subsequently return to $\Sigma$ at a point $p_5=(0,y_5)$, as in Figure~\ref{Fig10}. In this case the entirety of the motion can be captured with a one-dimensional map that compares the values of $y_1$ and $y_5$, and if $y_1=y_5$ then the orbit forms a periodic orbit, which is also a limit cycle if it is isolated.
\begin{figure}[ht]
	\begin{center}
		\begin{minipage}{6cm}
			\begin{center} 
				\begin{overpic}[height=5cm]{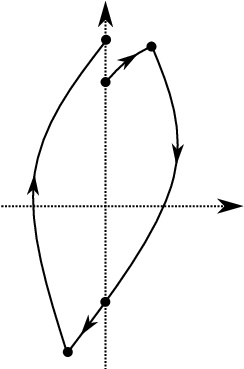} 
					\put(63,39){$x$}
					\put(24,98){$y$}
					\put(22,74){$p_1$}
					\put(43.5,87){$p_2$}
					\put(30.5,16){$p_3$}
					\put(12,0){$p_4$}
					\put(31,88.5){$p_5$}
				\end{overpic}
				
				$\delta=-1$.
			\end{center}
		\end{minipage}
		\begin{minipage}{6cm}
			\begin{center} 
				\begin{overpic}[height=5cm]{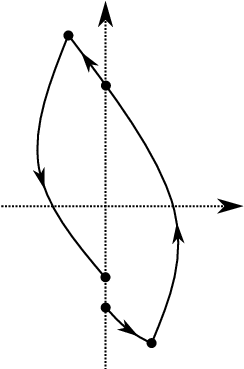} 
					\put(63,39){$x$}
					\put(24,98){$y$}
					\put(21,15.5){$p_1$}
					\put(44,5){$p_2$}
					\put(31,77){$p_3$}
					\put(10,90){$p_4$}
					\put(31,23.5){$p_5$}
				\end{overpic}
				
				$\delta=1$.
			\end{center}
		\end{minipage}
	\end{center}
\caption{Sketches of typical orbits of the time-delayed system~\eqref{3x} when $Z$ has a monodromic tangential singularity at the origin.}\label{Fig10}
\end{figure}

Suppose $Z$ has a $(2k_L,2k_R)$-monodromic tangential singularity at the origin. Given $J, K\in\{L,R\}$ with $J\neq K$, let
\begin{equation}\label{37}
	\eta_{td}^J=\left\{\begin{array}{ll}
		\displaystyle \frac{a_{0,2k_K-1}^K}{a_{0,2k_J-1}^J}b_{00}^J, & \text{if } k_J>k_K, 
		\vspace{0.2cm} \\
		\displaystyle \frac{a_{0,2k_K-1}^K}{a_{0,2k_J-1}^J}b_{00}^J - b_{00}^K, &\text{if } k_J=k_K, 
		\vspace{0.2cm} \\
		-b_{00}^K, &\text{if } k_J<k_K,
	\end{array}\right.
\end{equation}
and
\begin{equation}\label{36}
	\kappa_{td}=\left\{\begin{array}{ll}
		-\eta^L_{td}, &\text{if } k_L>k_R, 
		\vspace{0.1cm} \\
		\eta^R_{td}-\eta^L_{td}, &\text{if } k_L=k_R, 
		\vspace{0.1cm} \\
		\eta^R_{td}, &\text{if } k_L<k_R.
	\end{array}\right.
\end{equation}
It follows from~\eqref{eq:delta} and Lemma~\ref{LC} that in all three cases
\begin{align*}
	\operatorname{sgn} \eta^L_{td} &= -\delta, &
	\operatorname{sgn} \eta^R_{td} &= \delta, &
	\operatorname{sgn} \kappa_{td} &= \delta.
\end{align*}
Also let $a_M=1+2|k_R-k_L|$, and
\begin{equation*}
	S_{td}^{\mu,\varepsilon}=\left\{(x,y)\in\{0\}\times(-\varepsilon,\varepsilon)\colon |y|^{a_M+\frac{1}{2}}>\mu, \, \delta y<0\right\}.
\end{equation*}
In the simplest case that $k_L=k_R=1$ and $V_2\neq0$, the limit cycle created by increasing the value of $\mu$ from zero was analyzed in~\cite{LiYuHan2013,Kow2017} for piecewise-linear systems, and~\cite[Theorem~$11.4$]{Sim2022} for general piecewise-smooth systems. Here we allow any $k_L,k_R\geqslant1$ and any $n\geqslant1$ such that~$V_{2n}$ is the first non-zero Lyapunov coefficient.

\begin{main}\label{TimeDelayedHLB}
	Suppose the origin is a $(2k_L,2k_R)$-monodromic tangential singularity and there is a minimal $n\in\mathbb{N}$ such that $V_{2n}\neq0$. If $V_{2n}<0$ (resp. $V_{2n}>0$), then there exist $\varepsilon,\overline{\mu}>0$ such that, for every $\mu\in(0,\overline{\mu})$, the time-delayed system~\eqref{3x} has a unique periodic orbit (resp. no periodic orbit) intersecting $S_{td}^{\mu,\varepsilon}$. In case of the existence of a periodic orbit, it is a hyperbolic  attracting limit cycle. Moreover, letting $(0,-\delta\mathcal{P}_{td}(\mu))$ denote its intersection with $S_{td}^{\mu,\varepsilon}$ and $\mathcal{T}_{td}(\mu)$ its period, 
	\begin{align}
		\mathcal{P}_{td}(\mu) &= \left|\frac{\kappa_{td}}{V_{2n}}\right|^{1/M} \mu^{1/M}+O\big(\mu^{2/M}\big), \label{45x}
		\vspace{0.2cm} \\
		\mathcal{T}_{td}(\mu) &= \left|\frac{2}{b_{00}^L}-\frac{2}{b_{00}^R}\right| \left|\frac{\kappa_{td}}{V_{2n}}\right|^{1/M} \mu^{1/M}+O\big(\mu^{2/M}\big), \label{46x}
	\end{align}
	where $M=2\big(n+|k_R-k_L|\big)$.
\end{main}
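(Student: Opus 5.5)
The plan is to reduce the dynamics of \eqref{3x} near the origin to a one-dimensional displacement function on $\Sigma$, built from the half-return maps $P_0^J$ and evolution times $T_0^J$ of Section~\ref{Sec2}. Then I would show that this function is a small perturbation of $D_0$ with a singular leading term in $\mu$.

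\textbf{Step 1: reduce the delay segments to shifts of the initial point.} Take $p_1=(0,y_1)$ with $\delta y_1<0$ and $(0,y_1)\in S_{td}^{\mu,\varepsilon}$.
\begin{itemize}
\item The orbit follows $X^L$ for exactly time $\mu$, reaching $p_2=\Phi^L(\mu,0,y_1)$.
\item By analyticity, $p_2=\bigl(a^L_{0,2k_L-1}y_1^{2k_L-1}\mu+O(\mu y_1^{2k_L}+\mu^2),\ y_1+b^L_{00}\mu+O(\mu|y_1|+\mu^2)\bigr)$.
\item Flow the $X^R$-orbit through $p_2$ backwards to $\Sigma$; it meets $\Sigma$ at a point $(0,\tilde y_1)$. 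This uses the local normal form of $X^R$ near an invisible $2k_R$-contact, where the $x$-coordinate along an $X^R$-orbit changes like $a^R_{0,2k_R-1}y^{2k_R-1}\,\Delta y/b^R_{00}$.
\item The resulting shift is
\[
\tilde y_1-y_1=\Bigl(b^L_{00}-b^R_{00}\,\tfrac{a^L_{0,2k_L-1}}{a^R_{0,2k_R-1}}\,y_1^{2(k_L-k_R)}\Bigr)\mu\,(1+o(1)).
\]
\end{itemize}
Keeping only the dominant term in each of the cases $k_L\gtrless k_R$ and $k_L=k_R$ should reproduce the quantities $\eta^J_{td}$ of \eqref{37}, up to the factor $|y_1|^{-2|k_R-k_L|}$. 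The condition $|y|^{a_M+1/2}>\mu$ defining $S_{td}^{\mu,\varepsilon}$ is exactly what makes the remainders of higher order. In particular it guarantees that the delay time $\mu$ is much shorter than the flight times $T_0^J(y)\sim|y|$, and that the orbit does not recross $\Sigma$ during the delay.

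\textbf{Step 2: assemble the return map.} The orbit is then the $X^R$-half-orbit from $\tilde y_1$, so it returns at $p_3=(0,P_0^R(\tilde y_1))$. The symmetric construction, with $X^R$ during the delay and $X^L$ afterwards and the roles of $J,K$ swapped, produces $\eta^{R}_{td}$ and the point $p_5$. Composing, and using the inverse-map form of the displacement $\delta(P_0^R-P_0^L)$, I expect a delayed displacement
\[
D_\mu(y)=V_{2n}y^{2n}+O(y^{2n+1})+\kappa_{td}\,\mu\,|y|^{-2|k_R-k_L|}\,\bigl(1+O(|y|)+O(\mu|y|^{-a_M})\bigr).
\]
Here $D_0=V_{2n}y^{2n}+\dots$ by \eqref{13}, and $\kappa_{td}$ from \eqref{36} collects the two delay contributions with the appropriate signs, using $\operatorname{sgn}\kappa_{td}=\delta$ and $P_0^J(y)=-y+O(y^2)$ to transport the shift.

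\textbf{Step 3: count and locate zeros.} Set $y=-\delta r$ with $r>0$ and multiply by $r^{2|k_R-k_L|}$. The zero equation becomes $V_{2n}r^M+|\kappa_{td}|\mu+\text{h.o.t.}=0$ (sign normalization to be checked), with $M=2(n+|k_R-k_L|)$.
\begin{itemize}
\item If $V_{2n}>0$ there is no zero.
\item If $V_{2n}<0$, rescale $r=\mu^{1/M}s$ and apply the implicit function theorem at $s_0=|\kappa_{td}/V_{2n}|^{1/M}$. This gives a unique zero with $\mathcal{P}_{td}=s_0\mu^{1/M}+O(\mu^{2/M})$, as in \eqref{45x}.
\item The derivative in $s$ at $s_0$ is nonzero, which gives hyperbolicity. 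Its sign shows the cycle is attracting.
\item Uniqueness on all of $S_{td}^{\mu,\varepsilon}$ follows from strict monotonicity of $r^{2|k_R-k_L|}D_\mu$ in $r$ there.
\end{itemize}

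\textbf{Step 4: the period.} The period is $2\mu$ plus the two flight times, i.e.\ $|T_0^R(\tilde y)|+|T_0^L(\cdot)|$. By \eqref{49T} this equals $\bigl(2/|b^L_{00}|+2/|b^R_{00}|\bigr)\mathcal{P}_{td}+O(\mathcal{P}_{td}^2)+O(\mu)$. Since $b^L_{00}b^R_{00}<0$ and $\mu=O(\mu^{2/M})$ because $M\ge2$, this yields \eqref{46x}.

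\textbf{Main obstacle.} The hard step is Step 1 with uniform error control. The shift carries the singular factor $|y|^{-2|k_R-k_L|}$, and one must show all neglected terms are $o(\mu|y|^{-2|k_R-k_L|})$ uniformly on $S_{td}^{\mu,\varepsilon}$ rather than only along the curve $r\sim\mu^{1/M}$. I would first try to straighten $X^R$ locally with an analytic flow box, so that $x$ becomes a polynomial-type function of the transversal coordinate. That should make the inversion of $(x_2,y_2)\mapsto\tilde y_1$ explicit through a Weierstrass-type factorization of $\varphi^R$. The differentiability in $y$ needed for monotonicity would then be obtained by the same expansions.
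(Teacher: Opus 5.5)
Your argument is correct in outline, but it obtains the half-return maps through a different decomposition than the paper does.

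\textbf{The paper's route.} The paper never introduces a virtual crossing point. In Lemmas~\ref{L2} and~\ref{L3} it solves $\varphi^R\big(t,\Phi^L(\mu,0,y)\big)=0$ directly, as a perturbation $t=T_0^R(y)+s$ of the unperturbed flight time. It works in blown-up variables $s=\tau\nu z$, $\mu=\nu z^{a_R}$, $y=z$, and gets $T^R_{td}$ and $P^R_{td}$ as convergent series in $(\mu/y^{a_R},y)$. It defines $P^L_{td}$ by backward flow, so $D_{td}=\delta(P^R_{td}-P^L_{td})$ plays the role of your forward composition $y_1\mapsto y_5$; the two have the same zero set.

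\textbf{Your route.} You factor $P^R_{td}=P_0^R\circ\Pi^R\circ\Phi^L(\mu,0,\cdot)$, where $\Pi^R$ projects along $X^R$ back onto $\Sigma$. Your first shift reproduces $\eta^R_{td}$, and the second (at $y_3\approx -y_1$) reproduces $\eta^L_{td}$, not the other way round as you labelled them. Together they give exactly $\kappa_{td}$ in all three cases, entering the displacement as $\delta\kappa_{td}=|\kappa_{td}|$, as you anticipated.

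\textbf{What each route buys.} Yours decouples the delay correction from the degenerate return map. It also shows $\eta^J_{td}$ splitting into the vertical drift $b^K_{00}\mu$ and the horizontal overshoot $a^K_{0,2k_K-1}y^{2k_K-1}\mu$, converted through the slope of $X^J$; that is where the exponent $2|k_R-k_L|$ comes from. The only new object, $\Pi^R$, is a crossing at slope of order $|y|^{2k_R-1}$. It is the analogue of the root $T^J_1$ continued from $t=0$ in Step~5 of the proof of Lemma~\ref{L1}, so your ``main obstacle'' is settled by the same quasi-homogeneous blow-up. For this, however, you must track $x_2=a^L_{0,2k_L-1}y_1^{2k_L-1}\mu\big(1+O(y_1)+O(\mu/y_1)\big)$. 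The bound $O(\mu y_1^{2k_L}+\mu^2)$ in your first bullet is too weak to control the shift uniformly on $S^{\mu,\varepsilon}_{td}$, already for $k_L=k_R=2$. The paper's direct route uses one template for both hysteresis and delay, and delivers $T^R_{td}$ at no extra cost. Your zero count ($r=\mu^{1/M}s$ plus the implicit function theorem) is the same as the paper's blow-up $\mu=\nu^M$, $y=\nu z$.

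\textbf{Three smaller points.}

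First, strict monotonicity of $r^{2|k_R-k_L|}D_\mu$ on all of $S^{\mu,\varepsilon}_{td}$ is not guaranteed. For $n\geqslant2$, near the edge $r^{a_M+1/2}\approx\mu$, the $r$-derivative of the term $|\kappa_{td}|\mu\,O(r)$ is $O(\mu)$ and may exceed $M|V_{2n}|r^{M-1}$. Uniqueness still follows by splitting the range of $r$:
\begin{itemize}
\item for $r\ll\mu^{1/M}$ the function is approximately $|\kappa_{td}|\mu$;
\item for $r\gg\mu^{1/M}$ it is approximately $V_{2n}r^M$;
\item the compact middle range is covered by the implicit function theorem.
\end{itemize}

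Second, in Step~4 the flight from $p_2$ to $p_3$ is $T_0^R(\tilde y_1)$ minus the virtual time from $(0,\tilde y_1)$ to $p_2$. That virtual time is of order $\mu|y|^{2(k_L-k_R)}$, hence not $O(\mu)$ when $k_L<k_R$; the other half is symmetric. On the cycle it is $O(\mu^{2n/M})$, so the period formula survives.

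Third, you take for granted that a periodic orbit through $S^{\mu,\varepsilon}_{td}$ has the standard history: it lies in $x<0$ during the time $\mu$ before reaching $p_1$, and switches once per half-turn. The paper's closing argument with the regions $A^J$, $B^J$ is what supplies this for the uniqueness claim.
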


The observation made in Remark~\ref{rem:onlyattracting} for Theorem~\ref{HystereticHLB}, concerning the fact that only an attracting limit cycle can bifurcate, also applies to Theorem~\ref{TimeDelayedHLB}.

To illustrate Theorem \ref{TimeDelayedHLB}, Figure~\ref{Fig15} shows a bifurcation diagram and sample phase portrait of the time-delayed system~\eqref{3x} with vector fields~\eqref{Example}. The solid curve shows the position of the limit cycle as a function of $\mu$, obtained by numeric calculations. The dashed curve shows the leading order term of~\eqref{45x}, which has $M=4$ and as expected the two curves agree as $\mu\to0$. Figure~\ref{Fig14}(b) shows the limit cycle in phase space for $\mu=0.03$.
\begin{figure}[ht]
	\begin{center}
		\begin{minipage}{6cm}
			\begin{center} 
				\begin{overpic}[height=5.5cm]{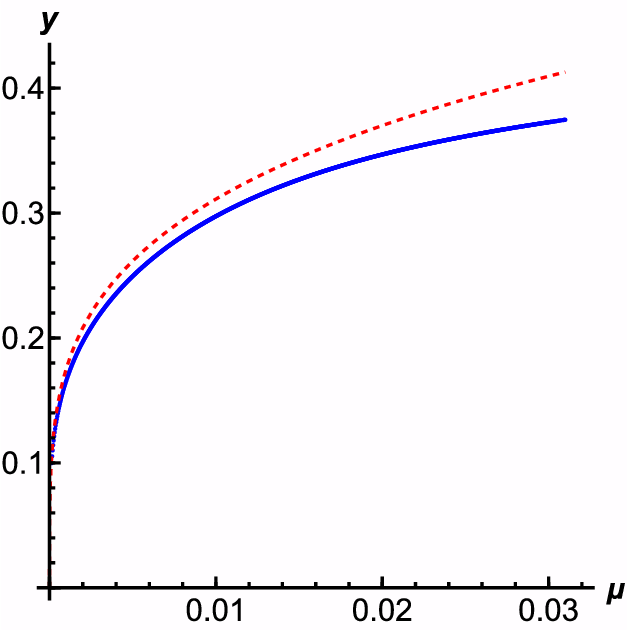} 
				\end{overpic}
				
				$(a)$
			\end{center}
		\end{minipage}
		\begin{minipage}{6cm}
			\begin{center} 
				\begin{overpic}[height=5.5cm]{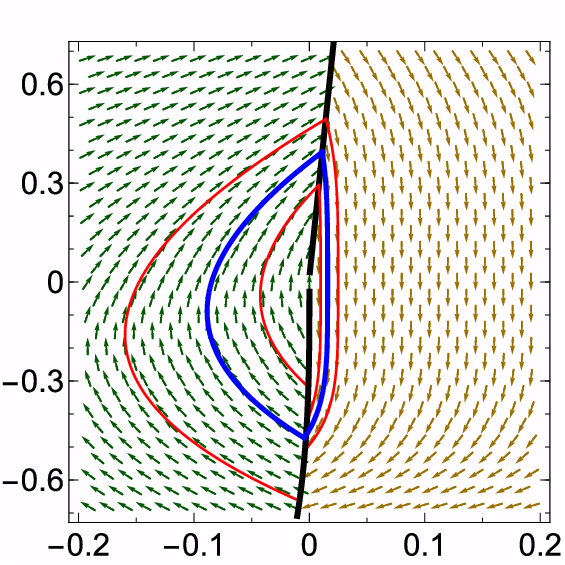} 
				\end{overpic}
				
				$(b)$
			\end{center}
		\end{minipage}
	\end{center}
\caption{A numerical verification of Theorem~\ref{TimeDelayedHLB} for the time-delayed system~\eqref{3x} with~\eqref{Example}. The solid curve in $(a)$ shows the position of the limit cycle (computed numerically), while the dashed curve is the leading order term in~\eqref{45x}. In $(b)$ we show a phase portrait for $\mu=0.03$ with the limit cycle in blue. The black curves are where orbits switch having traveled past $\Sigma$ by a time $\mu$.}\label{Fig15}
\end{figure}

\section{Existence and properties of half-return maps}\label{Sec3}

In this section we extend the return time function $T_0^J$ and half-return map $P_0^J$ studied in~\cite{NovSil2021,NovSil2022} to small $\mu>0$, generalizing the relations~\eqref{50} and~\eqref{51} to hysteresis and time-delay.

\subsection{Hysteresis}\label{Sec3.1}

Recall, the half-maps $P_0^R$ and $P_0^L$ correspond to evolution
to and from the line $x=0$ (i.e.~the switching manifold $\Sigma$). In the case $\delta =-1$, orbits rotate clockwise around the origin, and consider a point $(0,y)$ with $y>0$. Then $P_0^R(y)$ is defined by following the forward orbit of $(0,y)$ under the right piece of the system until it reintersects $x=0$, and $P_0^L(y)$ is defined by following the backward orbit of $(0,y)$ under the left piece of the system until it reintersects $x=0$, see Figure~\ref{Fig9}$(a)$.

For the hysteretic system~\eqref{3}, we instead work with the lines $x=\mu$ and $x=-\mu$, as these lines are where orbits switch. However, since we allow orbits to have quartic or higher-order tangencies with the switching manifold, orbits can have many intersections with $x=\pm\mu$ in a neighborhood of $(\mu,y)=(0,0)$. For this reason, we introduce the sets
\begin{align}
	A_h^J &= \left\{(x,y)\in(-\varepsilon,\varepsilon)^2\colon|x|<|y|^{2k_J+\frac{1}{2}}\right\}, \label{48A} \\
	W_h^J &= \left\{(\mu,y)\in(0,\varepsilon)\times(-\varepsilon,\varepsilon)\colon \mu<|y|^{2k_J+\frac{1}{2}}, \, \delta y<0\right\}, \label{48W}
\end{align}
for $J\in\{L,R\}$, where $\varepsilon>0$ is suitably small.
As we shall see in Appendix~\ref{AppA}, by taking pairs $(\mu,y) \in W^J_h$, the value $|y|$ is large enough that the orbit of $X^J$ through $(\mu,y)$ only has two intersections with $x=-\mu$ in $A_h^J$, see Figure~\ref{Fig17}.
\begin{figure}[ht]
	\begin{center}
		\begin{minipage}{6cm}
			\begin{center} 
				\begin{overpic}[height=5cm]{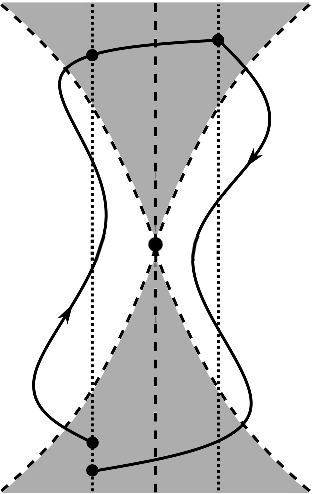} 
					\put(38.25,101.5){{\tiny$x=\mu$}}
					\put(12.5,-3){{\tiny$x=-\mu$}}
					\put(39.5,87){$p$}
					\put(9.5,3.5){$p_R$}
					\put(21.5,9){$p_L$}
					\put(21,85){$p_*$}
				\end{overpic}
				
				$\;$
				
				$(a)$ $\delta=-1$.
			\end{center}
		\end{minipage}
		\begin{minipage}{6cm}
			\begin{center} 
				\begin{overpic}[height=5cm]{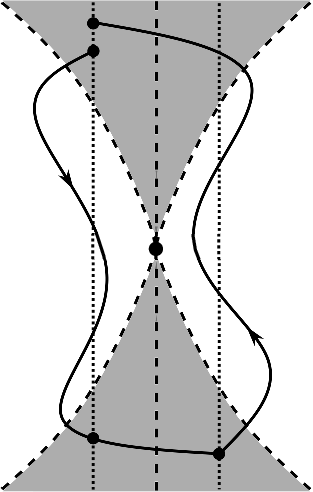} 
					\put(38.25,-3){{\tiny$x=\mu$}}
					\put(12.5,101.5){{\tiny$x=-\mu$}}
					\put(47,5){$p$}
					\put(9.25,94.5){$p_R$}
					\put(21.5,88){$p_L$}
					\put(21,13){$p_*$}
				\end{overpic}
				
				$\;$
				
				$(b)$ $\delta=1$.
			\end{center}
		\end{minipage}
	\end{center}
\caption{Phase portraits of the hysteretic system~\eqref{3} with $\mu>0$ illustrating the definitions of $P_h^L$ and $P_h^R$. In this case $k_L=k_R$ so the sets $A_h^L$ and $A_h^R$ (shaded) are identical. Consider the intersection between the shaded area and the line $x=\mu$. The section $S_h^{\mu,\varepsilon}$ is the connected component of this intersection containing $p$.}\label{Fig17}
\end{figure}

In the simplest case of quadratic tangencies, i.e.~$k_J=1$,
this type of restriction was employed by Makarenkov~\cite{Mak2017}, who obtained the formulas
\begin{align*}
	T^J_h(\mu,y)
	&=-\frac{2}{b_{00}^J}y+\frac{2}{a_{01}^J}\frac{\mu}{y}+R^J(\mu,y),
	\vspace{0.2cm} \\
	P^J_h(\mu,y) &=-y + \alpha_2^J y^2 + \frac{2 b_{00}^J}{a_{01}^J}\frac{\mu}{y}+S^J(\mu,y),
\end{align*}
where $R^J$ and $S^J$ contain higher order terms. Notice the $\mu/y$-terms are small because with $(\mu,y) \in W_h^J$, where $k_J=1$, we have $|\mu/y|<|y|^\frac{3}{2}$.

In view of the above remarks, we define $P_h^J$ and the corresponding evolution time function $T_h^J$ as follows. Given $(\mu,y)\in W_h^R$, let $P_h^R(\mu,y)$ be the second coordinate of the first point $p_R$ at which the forward orbit of $p=(\mu,y)$ under $X^R$ intersects $x=-\mu$, and let $T_h^R(\mu,y)$ be the corresponding evolution time, see Figure~\ref{Fig17}. Given $(\mu,y)\in W_h^L$, let $P_h^L(\mu,y)$ be the second coordinate of the
\emph{second} point $p_L$ at which the backward orbit of $p=(\mu,y)$ under $X^L$ intersects $x=-\mu$ in $A_h^L$, and let $T_h^L(p)$ be the corresponding evolution time.

Notice that the for left map $P_h^L$, it is no longer helpful to evolve backwards and take the first intersection $p_*$ with $x=-\mu$, 
because our goal is to identify the limit cycle. As seen, for example in Figure~\ref{Fig14}$(b)$, the limit cycle corresponds to the second intersection with $x=-\mu$. Recall, $\eta^J_h$ is given by~\eqref{57}.

\begin{lemma}\label{L1}
	Consider the hysteretic system~\eqref{3} with a $(2k_L,2k_R)$-monodromic tangential singularity at the origin.
	There exists $\varepsilon>0$ such that its evolution time functions $T^J_h$ are analytic in $W^J_h$ and given by 
	\begin{equation}\label{HalfPeriod}
		T^J_h(\mu,y)=T_0^J(y)+\frac{\mu}{y^{2k_J-1}}\sum_{i,j\geqslant0}\tau_{ij}^{J,h}\left(\frac{\mu}{y^{2k_J}}\right)^iy^j,
	\end{equation}
	with $\tau_{00}^{J,h}=2/a_{0,2k_J-1}^J$. Moreover, its half-return maps $P^J_h$ are also analytic in $W^J_h$ and given by
	\begin{equation}\label{HalfPoincare}
		P^J_h(\mu,y)=P_0^J(y) + \frac{\mu}{y^{2k_J-1}}\left(\eta^J_h+R_1^J(y)\right)+R_2^J(\mu,y),
	\end{equation}
	where $R_1^J\in O(y)$ and $R_2^J\in O(\mu^2)$.
\end{lemma}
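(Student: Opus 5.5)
The plan is to reduce both statements to the implicit function theorem after a blow-up adapted to the scaling $\mu\sim y^{2k_J}$. Fix $J$ and write $k=k_J$, $a=a^J_{0,2k-1}$, $b=b^J_{00}$. We look for the relevant solution $t$ of $\varphi^J(t,\mu,y)=-\mu$ in the form
\[
t=T_0^J(y)+\frac{\mu}{y^{2k-1}}\,s,\qquad u:=\frac{\mu}{y^{2k}},
\]
so that $t-T_0^J(y)=u\,y\,s$. On $W_h^J$ we have $|u|<|y|^{1/2}$, so $(u,y)$ ranges over a small neighbourhood of $(0,0)$ while $s$ stays bounded. Here $\varphi^J(t,\mu,y)$ means the first component of the flow of $X^J$ starting at $(\mu,y)$.

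First we expand $\varphi^J(T_0^J(y)+uys,\,uy^{2k},\,y)+uy^{2k}$ as a double Taylor series in the time increment $uys$ and the initial abscissa $\mu=uy^{2k}$, centred at $(T_0^J(y),0,y)$. Since $\varphi^J(T_0^J(y),0,y)=0$, the constant term vanishes. The pure time derivatives are $\partial_t^m\varphi^J(T_0^J(y),0,y)=(X^J)^mh(0,P_0^J(y))$.

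By Lemma~\ref{LC}(i) and $P_0^J(y)=-y+O(y^2)$, these satisfy:
\begin{itemize}
\item $(X^J)^mh(0,P_0^J(y))=O(y^{2k-m})$ for $m\le 2k$;
\item in particular $Xh=f^J(0,P_0^J(y))=-a\,y^{2k-1}(1+O(y))$.
\end{itemize}

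Hence the term of order $m$ in $t$ contributes $O(u^m y^{2k})$ when $m\le 2k$, and $O(u^m y^m)$ when $m>2k$. The mixed terms $\mu^j(uys)^m$ with $j\ge1$ contribute $u^{j+m}y^{2kj+m}$. Every term is therefore divisible by $uy^{2k}$. After dividing, we obtain an analytic function
\[
G(s,u,y)=-a\,s+\partial_x\varphi^J(T_0^J(y),0,y)+1+O(u)+O(y)=-a\,s+2+O(u)+O(y),
\]
using $\partial_x\varphi^J(T_0^J(y),0,y)=1+O(y)$, which holds because $T_0^J(y)=O(y)$. Since $G(2/a,0,0)=0$ and $\partial_sG=-a\ne0$, the implicit function theorem gives a unique analytic root $s(u,y)=\sum\tau^{J,h}_{ij}u^iy^j$ with $\tau^{J,h}_{00}=2/a$. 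This yields~\eqref{HalfPeriod}, and analyticity on $W_h^J$ follows by composition with $(\mu,y)\mapsto(\mu/y^{2k},y)$.

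Next we substitute into $P_h^J=\psi^J(T_h^J,\mu,y)$ and expand in the same way. The leading terms are:
\begin{itemize}
\item $\psi^J(T_0^J(y),0,y)=P_0^J(y)$;
\item the time increment times $\partial_t\psi^J=g^J(0,P_0^J(y))=b+O(y)$, which gives $\frac{\mu}{y^{2k-1}}\big(\tfrac{2b}{a}+O(y)\big)=\frac{\mu}{y^{2k-1}}\big(\eta_h^J+O(y)\big)$;
\item the term $\mu\,\partial_x\psi^J=O(\mu)=\frac{\mu}{y^{2k-1}}O(y^{2k-1})$, which is absorbed into $R_1^J\in O(y)$ because $2k-1\ge1$.
\end{itemize}
The remaining terms are at least quadratic in $\mu$, collected as $R_2^J\in O(\mu^2)$ for fixed $y$, or are higher order in $y$ and go into $R_1^J$. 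This yields~\eqref{HalfPoincare}.

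I expect the main obstacle to be justifying that the root produced by the implicit function theorem is the intersection singled out in the definitions. For $P_h^R$ this is the first forward crossing of $x=-\mu$; for $P_h^L$ it is the second backward crossing inside $A_h^J$, the first being $p_*$ near $t=0$. To settle this, I would show that every solution of $\varphi^J(t,\mu,y)=-\mu$ with the orbit in $A_h^J$ lies either in $|t|\lesssim |u|^{1/(2k)}|y|$ or within $O(uy)$ of $T_0^J(y)$. The argument would use the normal form $\varphi^J(t,0,y)\approx c\,t\,(t-T_0^J(y))\,Q(t,y)$ with $Q$ of fixed sign, which holds because, by the invisibility condition~(ii), $t\mapsto\varphi^J(t,0,y)$ has exactly two zeros in the relevant interval. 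It would also use the constraint $|\mu|<|y|^{2k+1/2}$, which keeps the level $-\mu$ below every local extremum of $\varphi^J$ other than those near the endpoints. Near $T_0^J(y)$, uniqueness then follows from the implicit function theorem above, and for $J=L$ the same computation with $t<0$ handles the backward time.
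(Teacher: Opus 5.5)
Your proposal is correct and follows the paper's proof. Your substitution $t=T_0^J(y)+\frac{\mu}{y^{2k_J-1}}s$, $u=\mu/y^{2k_J}$ is exactly the blow-up $(\tau,\nu,z)$ of Step~3, with your $s$ playing the role of $\tau$ and your $u$ that of $\nu$. Both arguments divide by $\mu$, apply the Implicit Function Theorem at the same root $\tau^{J,h}_{00}=2/a^J_{0,2k_J-1}$, and read off $\eta^J_h=2b^J_{00}/a^J_{0,2k_J-1}$ from $\partial_t\psi^J\approx b^J_{00}$.

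The differences are only in bookkeeping. You verify divisibility by $\mu$ through the vanishing orders of $(X^J)^m h$ at the landing point $(0,P_0^J(y))$, where the paper splits the expansion term by term into $r^J_1,\dots,r^J_8$. For identifying the correct crossing, the paper uses transversality of $X^J$ to vertical lines inside $A^J_h$ together with a second branch $T^J_1$ extending $t=0$, satisfying $T^L_h<T^L_1<0$. That is a tighter way to carry out your zero-counting sketch than the loose bounds you state, namely $|t|\lesssim|u|^{1/(2k)}|y|$ and the ``below every local extremum'' heuristic.
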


The proof of Lemma~\ref{L1} is postponed to Appendix~\ref{AppA}.
To study the limit cycles of the hysteretic system~\eqref{3}, we now combine the two half-return maps, working on the set 
\begin{equation}
		W_h=\left\{(\mu,y)\in(0,\varepsilon)\times(-\varepsilon,\varepsilon)\colon \mu<|y|^{2k_M+\frac{1}{2}}, \, \delta y<0\right\},
\end{equation}
and notice $W_h=W_h^L\cap W_h^R$ by~\eqref{48W}. Specifically, we define
\[
	T_h(\mu,y)=T^R_h(\mu,y)-T^L_h(\mu,y), \quad D_h(\mu,y)=\delta\big(P^R_h(\mu,y)-P^L_h(\mu,y)\big),
\]
where $\delta=\pm1$ is given by~\eqref{eq:delta}. Notice that $D_h(0,y)=D_0(y)$,
where $D_0(y)=\delta\left(P^R_0(y)-P^L_0(y)\right)$ as in~\eqref{13}. Let
\[
	\vartheta_h=\left\{\begin{array}{ll}
				-2/a_{0,2k_L-1}^L, &\text{if } k_L>k_R, 
				\vspace{0.1cm} \\
				2/a_{0,2k_R-1}^R-2/a_{0,2k_L-1}^L, &\text{if } k_L=k_R, 
				\vspace{0.1cm} \\
				2/a_{0,2k_R-1}^R, &\text{if } k_L<k_R.
	\end{array}\right.
\] 
and recall $\kappa_h$ is defined in~\eqref{57}. The following result follows directly from Lemma~\ref{L1}.

\begin{corollary}\label{C1}
		Consider the hysteretic system~\eqref{3} with a $(2k_L,2k_R)$-monodromic tangential singularity at the origin. There exists $\varepsilon>0$ such that its evolution time $T_h$ and displacement~$D_h$ functions are analytic in $W_h$ and given by
	\begin{align}
		T_h(\mu,y) &= T_0(y)+\frac{\mu}{y^{2k_M-1}}\big(\vartheta_h+R_1(y)\big)+R_2(\mu,y),\label{40}
		\vspace{0.2cm} \\
		D_h(\mu,y) &= D_0(y)+\delta\frac{\mu}{y^{2k_M-1}}\big(\kappa_h+S_1(y)\big)+S_2(\mu,y),
	\end{align}
	with $R_1$, $S_1\in O(y)$, and $R_2$, $S_2\in O(\mu^2)$.
\end{corollary}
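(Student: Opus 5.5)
The plan is to subtract the two expansions of Lemma~\ref{L1} on the common domain $W_h=W_h^L\cap W_h^R$ and to sort the resulting $\mu$-terms by their power of $y$. Here $T_0(y):=T_0^R(y)-T_0^L(y)$. Analyticity of $T_h$ and $D_h$ in $W_h$ is immediate, since $T^J_h$ and $P^J_h$ are analytic on $W_h^J\supset W_h$; we take $\varepsilon$ to be the minimum of the two values furnished by Lemma~\ref{L1}. Setting $\mu=0$ gives $T_h(0,y)=T_0(y)$ and $D_h(0,y)=D_0(y)$.

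For $T_h$, Lemma~\ref{L1} gives
\[
T^J_h-T^J_0=\frac{\mu}{y^{2k_J-1}}\Big(\tau^{J,h}_{00}+O(y)\Big)+\frac{\mu^2}{y^{4k_J-1}}\sum_{i\geqslant1,\,j\geqslant0}\tau^{J,h}_{ij}\Big(\frac{\mu}{y^{2k_J}}\Big)^{i-1}y^j .
\]
I will treat the second group as an $O(\mu^2)$ remainder. Although it is singular in $y$, on $W_h$ we have $|\mu/y^{2k_J}|<|y|^{1/2}$, so the series converges and the group is $\mu^2$ times a factor bounded by a power of $1/|y|$. This is the same interpretation of $O(\mu^2)$ already used for $R_2^J$ in Lemma~\ref{L1}, and I will adopt it consistently; this step is the main technical point.

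Next I rescale the $\mu$-linear terms to the common power $y^{-(2k_M-1)}$. If $k_J<k_M$, then
\[
\frac{\mu}{y^{2k_J-1}}=\frac{\mu}{y^{2k_M-1}}\,y^{2(k_M-k_J)}, \qquad 2(k_M-k_J)\geqslant2,
\]
so that whole contribution can be absorbed into $R_1\in O(y)$ (indeed into $O(y^2)$). If $k_J=k_M$, the leading coefficient $\tau^{J,h}_{00}=2/a^J_{0,2k_J-1}$ survives. Taking the difference $T^R_h-T^L_h$ then produces exactly $\vartheta_h$ in each of the three cases $k_L>k_R$, $k_L=k_R$, $k_L<k_R$: in the first only $-2/a^L_{0,2k_L-1}$ survives, in the second both terms survive, and in the third only $2/a^R_{0,2k_R-1}$ survives. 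This gives \eqref{40}.

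The argument for $D_h$ is identical, using \eqref{HalfPoincare} with the remainders $R_1^J\in O(y)$ and $R_2^J\in O(\mu^2)$. Multiplying by $\delta$ and repeating the same case split turns $\eta^R_h-\eta^L_h$ into $\kappa_h$ as defined in \eqref{57}. The lower-order $\eta^J_h$ term, when $k_J<k_M$, is absorbed into $S_1$, and $S_2=\delta(R_2^R-R_2^L)$ collects the $O(\mu^2)$ remainders. Overall the proof is bookkeeping; the only care needed is to make the meaning of $O(\mu^2)$ uniform on $W_h$.
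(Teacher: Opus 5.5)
Your proposal is correct and is essentially the paper's argument. The paper only says that Corollary~\ref{C1} ``follows directly from Lemma~\ref{L1}'', and that is exactly your argument: subtract the two expansions on $W_h=W_h^L\cap W_h^R$, rescale $\mu/y^{2k_J-1}=y^{2(k_M-k_J)}\,\mu/y^{2k_M-1}$, and do the three-case identification of $\vartheta_h$ and $\kappa_h$. Your observation that the $i\geqslant1$ terms are $O(\mu^2)$ only with $y$-singular coefficients, in the same sense already implicit in $R_2^J$, is a fair clarification that the paper leaves unstated.
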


\subsection{Time-delay}\label{Sec3.2}

We now provide similar extensions of $T^J_0$ and $P^J_0$ to the time-delayed system~\eqref{3x}. Let
\begin{equation}\label{39}
	W^J_{td}=\left\{(\mu,y)\in(0,\varepsilon)\times(-\varepsilon,\varepsilon)\colon \mu<|y|^{a_J+\frac{1}{2}}\right\},
\end{equation}
with $a_J=1+\max\{0,2(k_J-k_K)\}$, and $K\in\{L,R\}$ such that $K\neq J$. We first define $T^R_{td}$ and $P^R_{td}$ for forward evolution in $\Sigma_0^R$.

Given $p=(0,y)$ with $\delta y<0$ and $(\mu,y) \in W^R_{td}$, we follow the forward orbit of $p$ under $X^L$ by a time $t=\mu$, arriving at a point $q_R$. Then we follow the forward orbit of $q_R$ under $X^R$ until it intersects the line $x=0$ at a point $p_R$, see Figure~\ref{Fig12}. 
\begin{figure}[ht]
	\begin{center}
		\begin{minipage}{6cm}
			\begin{center} 
				\begin{overpic}[height=5cm]{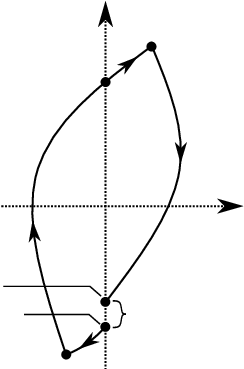} 
					\put(63,39){$x$}
					\put(31,98){$y$}
					\put(30.5,75){$p$}
					\put(-8,22){$p_R$}
					\put(-1.5,14){$p_L$}
					\put(35,12.5){$D_{td}(p)<0$}
					\put(43,88){$q_R$}
					\put(10,1){$q_L$}
				\end{overpic}
				
				$(a)$ $\delta=-1$.
			\end{center}
		\end{minipage}
		\begin{minipage}{6cm}
			\begin{center} 
				\begin{overpic}[height=5cm]{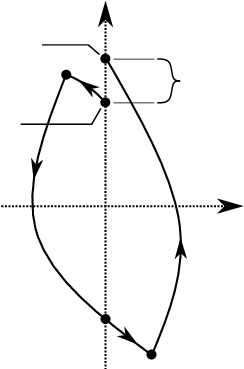} 
					\put(63,39){$x$}
					\put(31,98){$y$}
					\put(31,14){$p$}
					\put(2,87){$p_R$}
					\put(-3,65.5){$p_L$}
					\put(50,76){$D_{td}(p)>0$}
					\put(9,78){$q_L$}
					\put(43,1){$q_R$}
				\end{overpic}
				
				$(b)$ $\delta=1$.
			\end{center}
		\end{minipage}
	\end{center}
\caption{An illustration of the time-delayed framework.}\label{Fig12}
\end{figure}
We define $P^R_{td}(\mu,y)$ as the second coordinate of $p_R$, and let $T^R_{td}(\mu,y)$ be the evolution time from $q_R$ to $p_R$.

\begin{lemma}\label{L2}
	Consider the time-delayed system~\eqref{3x} with a $(2k_L,2k_R)$-monodromic tangential singularity at the origin. There exists $\varepsilon>0$ such that the evolution time function $T^R_{td}$ is analytic in $W^R_{td}$ and given by 
	\begin{equation}\label{TDHalfPeriod}
		T^R_{td}(\mu,y)=T_0^R(y)+\frac{\mu}{y^{a_R-1}}\sum_{i,j\geqslant0}\tau_{ij}^{R,td}\left(\frac{\mu}{y^{a_R}}\right)^iy^j,
	\end{equation}
	where
	\[\tau_{00}^{R,td}=\left\{\begin{array}{ll}
		a_{0,2k_L-1}^L/a_{0,2k_R-1}^R, & \text{if } k_R>k_L, 
		\vspace{0.1cm} \\
		a_{0,2k_L-1}^L/a_{0,2k_R-1}^R-2b_{00}^L/b_{00}^R, &\text{if } k_R=k_L, 
		\vspace{0.1cm} \\
		-2b_{00}^L/b_{00}^R, &\text{if } k_R<k_L.
	\end{array}\right.\]
	Moreover, its half-return map $P^R_{td}$ is also analytic in $W^R_{td}$ and given by
	\begin{equation}\label{TDHalfPoincare}
		P^R_{td}(\mu,y)=P_0^R(y)+\frac{\mu}{y^{a_R-1}}\left(\eta^R_{td}+R_1^R(y)\right)+R_2^R(\mu,y),
	\end{equation}
	with $\eta^R_{td}$ given by~\eqref{37},	$R_1^R\in O(y)$, and $R_2^R\in O(\mu^2)$.
\end{lemma}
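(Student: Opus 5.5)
The plan is to compose the two pieces of motion explicitly and solve the resulting hitting-time equation with the implicit function theorem, after rescaling so that everything becomes analytic in the variables $(u,y)$ with $u=\mu/y^{a_R}$.

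\emph{Step 1: the delay leg.} Let $q_R=\Phi^L(\mu,0,y)=(x_q,y_q)$. By analyticity of $\Phi^L$ and condition (i) of Lemma~\ref{LC}, expanding in $\mu$ gives
\[
x_q=\mu\,f^L(0,y)+O(\mu^2)=\mu\,y^{2k_L-1}\big(a^L_{0,2k_L-1}+O(y)\big)+O(\mu^2),
\qquad
y_q=y+\mu\big(b^L_{00}+O(y)\big)+O(\mu^2).
\]
Since $\delta y<0$, the sign conditions (ii)--(iii) place $q_R$ on the side $x\geqslant 0$, consistent with the orbit now being governed by $X^R$.

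\emph{Step 2: the hitting-time equation for $X^R$.} We seek $t$ near $T^R_0(y_q)$ with $\varphi^R(t,x_q,y_q)=0$. Write $t=T_0^R(y_q)+s$ and expand
\[
\varphi^R(T^R_0(y_q)+s,x_q,y_q)=x_q\,\partial_x\varphi^R(T_0^R,0,y_q)+s\,\dot\varphi^R(T_0^R,0,y_q)+O(s^2,\,x_qs,\,x_q^2).
\]
The two main coefficients are:
\begin{itemize}
\item $\partial_x\varphi^R=1+O(y)$;
\item $\dot\varphi^R(T^R_0,0,y_q)=f^R\big(0,P^R_0(y_q)\big)=-a^R_{0,2k_R-1}y^{2k_R-1}(1+O(y))$, because $P^R_0(y)=-y+O(y^2)$.
\end{itemize}
The quadratic term in $s$ has coefficient of order $y^{2k_R-2}$.

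Dividing by $y^{2k_R-1}$ and setting $s=\mu y^{1-a_R}\sigma$ together with $u=\mu/y^{a_R}$, the equation becomes analytic in $(\sigma,u,y)$. At $u=y=0$ it is linear in $\sigma$ with nonzero coefficient. The key point is that $x_q/y^{2k_R-1}$ equals $\mu y^{2k_L-2k_R}(\cdots)$, which is $\mu y^{1-a_R}$ times an analytic function of $y$; that analytic function vanishes at $y=0$ when $k_L>k_R$. On $W^R_{td}$ we have $|u|<|y|^{1/2}$, so $u$ is small.

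The implicit function theorem then gives $\sigma=\sigma(u,y)$ analytic. Adding the shift $T^R_0(y_q)-T^R_0(y)=-\tfrac{2}{b^R_{00}}\,b^L_{00}\,\mu+\dots$, which comes from \eqref{49T}, yields the form \eqref{TDHalfPeriod}. Collecting leading terms gives
\[
\tau^{R,td}_{00}=\frac{a^L_{0,2k_L-1}}{a^R_{0,2k_R-1}}\,[k_R\geqslant k_L]\;-\;\frac{2b^L_{00}}{b^R_{00}}\,[k_R\leqslant k_L],
\]
where $[\cdot]$ is $1$ if the condition holds and $0$ otherwise. This matches the three cases in the statement.

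\emph{Step 3: the return point.} Substitute the time into $\psi^R$:
\[
P^R_{td}=\psi^R\big(T^R_0(y_q)+s,\,x_q,\,y_q\big)=P^R_0(y_q)+s\,g^R\big(0,P^R_0(y_q)\big)+O(x_q)+O(s^2).
\]
Using $P^R_0(y_q)=P^R_0(y)-b^L_{00}\mu(1+O(y))$ and $g^R\approx b^R_{00}$, this gives
\[
P^R_{td}=P^R_0(y)+\frac{\mu}{y^{a_R-1}}\Big(\tfrac{a^L_{0,2k_L-1}}{a^R_{0,2k_R-1}}\,b^R_{00}\,[k_R\geqslant k_L]-b^L_{00}\,[k_R\leqslant k_L]+O(y)\Big)+O(\mu^2),
\]
which is exactly $\eta^R_{td}$ in \eqref{37} with $J=R$, $K=L$.

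\emph{Main obstacle.} The delicate part is justifying that this root is the \emph{first} forward intersection of the $X^R$-orbit of $q_R$ with $x=0$. The issue is that $\varphi^R(t,0,y)$ has a root of multiplicity $2k_R-1$ at $t=0$ (beyond the trivial one), and perturbing by $x_q$ could create nearby small positive roots. I would handle this using the factorization $\varphi^R(t,0,y)=t\,(t-T_0^R(y))\,G(t,y)$ from \cite{NovSil2021,NovSil2022}. On the region $W^R_{td}$ the perturbation $x_q$ is dominated by $|y|^{2k_R+1/2}$-type quantities, while $|\varphi^R|$ is bounded below by a multiple of $|y|^{2k_R-1}|t|$ away from the endpoint roots. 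Together with the sign of $x_q$ from Step~1, this shows no extra zero appears for $0<t<T^R_{td}$. This is precisely the role of the exponent $a_R+\tfrac12$ in \eqref{39}.
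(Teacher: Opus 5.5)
Your proof is correct. It uses the paper's skeleton: rescale $s$ by $\mu y^{1-a_R}$ and $\mu$ by $y^{a_R}$, solve the rescaled hitting equation by the implicit function theorem, then substitute into $\psi^R$. Where you differ is in how you organize the expansion.

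The paper centers at $T_0^R(y)$. It inserts $G^L=\varphi^L(\mu,0,y)$ and $H^L=\psi^L(\mu,0,y)$ into the normal form $\varphi^R=x+a^R_{0,2k_R-1}h^R+R^R$ and expands $h^R(t,H^L)$ binomially. The constant $C^R$ in $\mathcal{H}^R=C^R-a^R_{0,2k_R-1}\tau+\mathcal{R}^R$ then comes out of one fairly heavy computation that treats the horizontal and vertical effects of the delay leg together.

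You center at $T_0^R(y_q)$ instead. Only the horizontal offset $x_q$ enters the implicit equation, and the vertical drift is handled separately by the explicit shift $T_0^R(y_q)-T_0^R(y)$. This makes $\tau^{R,td}_{00}$ transparent:
\begin{itemize}
\item $a^L_{0,2k_L-1}/a^R_{0,2k_R-1}$ is $x_q\approx a^L_{0,2k_L-1}\mu y^{2k_L-1}$ divided by the exit speed $-\dot\varphi^R\approx a^R_{0,2k_R-1}y^{2k_R-1}$;
\item $-2b^L_{00}/b^R_{00}$ is $(T_0^R)'(0)$ times the drift $b^L_{00}\mu$;
\item the three cases simply record which of these two is of order $\mu y^{1-a_R}$.
\end{itemize}
You read off $\eta^R_{td}$ the same way. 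You also address the first-intersection question, which the paper only asserts by analogy with Step~5 of Lemma~\ref{L1}.

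A few details need tightening.
\begin{itemize}
\item Dividing only by $y^{2k_R-1}$ leaves a common factor $uy$ in every term. You must divide by $\mu y^{2k_R-a_R}=u\,y^{2k_R}$ (the paper's $\nu z^{2k_R}$) to get an equation that is nondegenerate at $u=y=0$.
\item Analyticity of the rescaled equation needs $\partial_t^j\varphi^R(T_0^R(y_q),0,y_q)=O(y^{2k_R-j})$ for every $j$, not only $j=2$. This follows from $\varphi^R(t,0,w)=w^{2k_R}\phi^R(t/w,w)$ as in \eqref{16}; the $\sigma^j$ term then contributes $O(u^{j-1})$.
\item The first-intersection argument is easier than your sketch. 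For $y\neq0$ the root $t=0$ of $\varphi^R(\cdot,0,y)$ is simple; the high multiplicity occurs only at $y=0$. Since $x_q>0$ and $\partial_x\varphi^R=1+O(t)$, you get $\varphi^R(t,x_q,y_q)>\varphi^R(t,0,y_q)\geqslant0$ on $[0,T_0^R(y_q)]$. On the subsequent window of length $O(|u|\,|y|)$ containing $T^R_{td}$, $\dot\varphi^R<0$. So no lower bound on $|\varphi^R|$ is needed.
\item The $\tfrac12$ in the exponent of $W^R_{td}$ only serves to make $u$ small, so that the implicit function theorem applies and the window is short. It is not specifically what rules out spurious intersections.
\end{itemize}
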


We postpone the proof of Lemma~\ref{L2} to Appendix~\ref{AppA}. We now define $T^L_{td}$ and $P^R_{td}$. As in the hysteretic setting, care is needed to define these as they involve backwards evolution.

Given $p=(0,y)$ with $\delta y<0$ and $(\mu,y)\in W^L_{td}$, we follow the backward orbit of $p$ under $X^L$ until arriving at a point $q_L$ with the following property: if we follow the backward orbit of $q_L$ under $X^R$ by a time $t=-\mu$, then the orbit arrives precisely at the line $x=0$, at a point $p_L$. We define $P^L_{td}(\mu,y)$ as the second coordinate of $p_L$. The function $T^L_{td}(\mu,y)$ is defined as the evolution time from $p$ to $q_L$.

\begin{lemma}\label{L3}
	Consider the time-delayed system~\eqref{3x} with a $(2k_L,2k_R)$-monodromic tangential singularity at the origin. There exists $\varepsilon>0$ such that its evolution time function $T^L_{td}$ is analytic in $W^L_{td}$ and given by 
	\[
		T^L_{td}(\mu,y)=T_0^L(y)+\frac{\mu}{ y^{2(k_L-k_R)}}\sum_{i,j\geqslant0}\tau_{ij}^{L,td}\left(\frac{\mu}{y^{a_L}}\right)^iy^j,
	\]
	where
	\[
		a_L=1+\max\{0,2(k_L-k_R)\}, \quad \tau_{00}^{L,td}=\frac{a_{0,2k_R-1}^R}{a_{0,2k_L-1}^L}.
	\]
	Moreover, its half-return map $P^L_{td}$ is also analytic in $W^L_{td}$ and given by
	\[
		P^L_{td}(\mu,y)=P_0^L(y)+\frac{\mu}{y^{a_L-1}}\left(\eta^L_{td}+R_1^L(y)\right)+R_2^L(\mu,y),
	\]
	with $\eta^L_{td}$ given by~\eqref{37},	$R_1^L\in O(y)$, and $R_2^L\in O(\mu^2)$.
\end{lemma}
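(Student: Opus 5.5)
The plan is to reduce the definition of $(P^L_{td},T^L_{td})$ to a single scalar equation in the unknown evolution time, and then solve it by the implicit function theorem after a blow-up rescaling. Fix $p=(0,y)$ with $\delta y<0$ and $(\mu,y)\in W^L_{td}$. Write the backward time as $s=T_0^L(y)+\sigma$, so that $q_L=\Phi^L(s,0,y)$. The defining condition on $q_L$ is
\[
F(\sigma,\mu,y):=\varphi^R\big(-\mu,\Phi^L(T_0^L(y)+\sigma,0,y)\big)=0 .
\]
Once $\sigma$ is found, we have $T^L_{td}=T_0^L(y)+\sigma$ and $P^L_{td}=\psi^R(-\mu,q_L)$. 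By \eqref{50}, \eqref{51} and \eqref{49T}, at $\sigma=0$ the point $q_L$ is $(0,P_0^L(y))$, where $P_0^L(y)=-y+O(y^2)$, and $T_0^L,P_0^L$ are analytic.

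First I expand both factors of $F$. For the $X^L$-flow near the crossing point $(0,Y)$, $Y=P_0^L(y)$, I use \eqref{eq:hOfPhiJ} together with Lemma~\ref{LC}(i). These give $(X^L)^m h(0,Y)=O(Y^{2k_L-m})$ for $1\le m\le 2k_L$, with $X^Lh(0,Y)=a^L_{0,2k_L-1}Y^{2k_L-1}(1+O(Y))$. Hence
\[
\varphi^L(T_0^L+\sigma,0,y)=\sigma\, a^L_{0,2k_L-1}Y^{2k_L-1}\Big(1+O(Y)+O\big(\tfrac{\sigma}{Y}\big)\Big).
\]
For the short $X^R$-flow I use $\varphi^R(-\mu,x,Y')=x-\mu f^R(x,Y')+O(\mu^2)$, with $f^R(x,Y')=a^R_{0,2k_R-1}Y'^{2k_R-1}(1+O(Y'))+O(x)$. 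Balancing the two leading terms, and using $Y\approx-y$ with odd exponents, gives $\sigma\approx\mu\,(a^R_{0,2k_R-1}/a^L_{0,2k_L-1})\,y^{2(k_R-k_L)}$. This is exactly the claimed leading term with $\tau^{L,td}_{00}$.

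To make this rigorous and obtain the stated double series, I rescale $\sigma=\mu y^{-2(k_L-k_R)}u$ and set $v=\mu/y^{a_L}$. I then show that $F/(\mu y^{2k_R-1})$ extends to an analytic function $G(u,v,y)$ near $(\tau^{L,td}_{00},0,0)$, with $G=a^L_{0,2k_L-1}u-a^R_{0,2k_R-1}+O(v)+O(y)$ and $\partial_uG\neq0$. The implicit function theorem then gives $u=\sum\tau^{L,td}_{ij}v^iy^j$, which is the formula for $T^L_{td}$. The role of $W^L_{td}$ is that $\mu<|y|^{a_L+1/2}$ forces $|v|<|y|^{1/2}$. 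This keeps $v$ in the domain of $G$, and it also forces $\sigma/Y\to0$ and $x(q_L)=O(\mu|y|^{2k_R-1})$, so all remainders are controlled. The same smallness gives uniqueness of $q_L$ among nearby candidates, in the same way that $A^L_h$ excludes spurious intersections in Lemma~\ref{L1}.

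Next, substituting into $\psi^R(-\mu,q_L)=\psi^L(T_0^L+\sigma,0,y)-\mu g^R(q_L)+O(\mu^2)$ gives
\[
P^L_{td}=P_0^L(y)+\sigma\, b^L_{00}-\mu\, b^R_{00}+\dots
\]
This becomes $P_0^L(y)+\mu\big(\tfrac{a^R_{0,2k_R-1}}{a^L_{0,2k_L-1}}b^L_{00}\,y^{2(k_R-k_L)}-b^R_{00}\big)+\dots$. Factoring out $\mu/y^{a_L-1}$ with $a_L-1=\max\{0,2(k_L-k_R)\}$, the three cases $k_L>k_R$, $k_L=k_R$, $k_L<k_R$ reproduce \eqref{37}, because in each case the subdominant term is absorbed into $R_1^L\in O(y)$. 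The terms quadratic in $\mu$ go into $R_2^L$.

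The main obstacle is the divisibility step: verifying that every term in the Taylor expansions of $\varphi^L$ in $\sigma$ and of $\varphi^R$ in $\mu$ is divisible by $\mu y^{2k_R-1}$ after the rescaling, so that $G$ is genuinely analytic in $(u,v,y)$. I would handle this term by term using the Lie-derivative bounds $(X^J)^mh(0,Y)=O(Y^{2k_J-m})$. After rescaling, the $\sigma^m$ term has size $\mu y^{2k_R-1}u^m(\sigma/Y)^{m-1}$, and $\sigma/Y$ is a monomial in $v$ and $y$. The mixed $x$-dependence of $f^R$ at $q_L$ is treated in the same way.
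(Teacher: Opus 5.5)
Your proposal is correct and follows essentially the paper's route. You use the same function $\varphi^R\big(-\mu,\Phi^L(T_0^L(y)+\sigma,0,y)\big)$ and the same shift by $T_0^L$. Your rescaling $\sigma=u\,\mu y^{2(k_R-k_L)}$ and normalization by $\mu y^{2k_R-1}$ are exactly the paper's $s=\tau\nu z^{b_L}$, $\mu=\nu z^{a_L}$ with division by $\nu z^{2k_M}$. You then apply the Implicit Function Theorem at $u=a^R_{0,2k_R-1}/a^L_{0,2k_L-1}$ and substitute into $\psi^R$. Your $G$ has the opposite overall sign to the paper's $\mathcal{H}^L$, because $P_0^L(y)^{2k_L-1}\approx -y^{2k_L-1}$, but this does not affect the root or $\partial_u G\neq0$. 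The only real difference is bookkeeping: re-centering via $\Phi^L(T_0^L+\sigma,0,y)=\Phi^L(\sigma,0,P_0^L(y))$ and using the vanishing orders of the Lie derivatives avoids the paper's binomial expansion about $(0,y)$. In that expansion the $i=1$ terms of $R_3^R$ have to be resummed to recover the $O(\mu y^{2k_R-1})$ contribution.
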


The proof of Lemma~\ref{L3} is postponed to Appendix~\ref{AppA}.
Similar to the hysteretic case, we consider the evolution time and displacement functions $T_{td},D_{td}\colon W_{td}\to\mathbb{R}$ given by
\[
	T_{td}(\mu,y)=T^R_{td}(\mu,y)-T^L_{td}(\mu,y), \quad D_{td}(\mu,t)=\delta\big(P^R_{td}(\mu,y)-P^L_{td}(\mu,y)\big),
\]
where $W_{td} = W^R_{td} \cap W^L_{td}$ is defined by
\begin{equation*}
	W_{td}=\left\{(\mu,y)\in(0,\varepsilon)\times(-\varepsilon,\varepsilon)\colon \mu<|y|^{a_M+\frac{1}{2}}, \; \delta y<0\right\},
\end{equation*}
see Figure~\ref{Fig12}. Recall $\kappa_{td}$ is given by~\eqref{36} and let
\[
	\vartheta_{td}=\left\{\begin{array}{ll}
		-\tau_{00}^{L,td}, &\text{if } k_L>k_R, 
		\vspace{0.1cm} \\
		\tau_{00}^{R,td}-\tau_{00}^{L,td}, &\text{if } k_L=k_R, 
		\vspace{0.1cm} \\
		\tau_{00}^{R,td}, &\text{if } k_L<k_R.
	\end{array}\right.
\] 
The following result follows directly from Lemmas~\ref{L2} and~\ref{L3}.

\begin{corollary}\label{C2}
	Consider the time-delayed system~\eqref{3x} with a $(2k_L,2k_R)$-monodromic tangential singularity at the origin. There exists $\varepsilon>0$ such that its evolution time $T_{td}$ and displacement $D_{td}$ functions are analytic in $W_{td}$ and given by
	\begin{align}
		T_{td}(\mu,y) &= T_0(y)+\frac{\mu}{y^{2|k_R-k_L|}}\big(\vartheta_{td}+R_1(y)\big)+R_2(\mu,y),\label{40x}
		\vspace{0.2cm} \\
		D_{td}(\mu,y) &= D_0(y)+\delta\frac{\mu}{y^{2|k_R-k_L|}}\big(\kappa_{td}+S_1(y)\big)+S_2(\mu,y),
	\end{align}
	with $R_1$, $S_1\in O(y)$, and $R_2$, $S_2\in O(\mu^2)$.
\end{corollary}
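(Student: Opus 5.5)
This corollary is obtained by subtracting the expansions of Lemmas~\ref{L2} and~\ref{L3}, after placing both correction terms over the common power $y^{2|k_R-k_L|}$. Throughout we write $T_0=T_0^R-T_0^L$ and $D_0=\delta(P_0^R-P_0^L)$ as in~\eqref{13}.

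\emph{Analyticity.} Since $W_{td}=W_{td}^R\cap W_{td}^L$, both lemmas apply on $W_{td}$ for a common $\varepsilon$, namely the smaller of the two. Hence $T_{td}$ and $D_{td}$ are analytic there as differences of analytic functions.

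\emph{Exponents.} Note that $a_R-1=\max\{0,2(k_R-k_L)\}$ and $a_L-1=\max\{0,2(k_L-k_R)\}$. The larger of the two equals $2|k_R-k_L|=a_M-1$. Lemma~\ref{L3} writes the $T^L_{td}$ correction with $y^{2(k_L-k_R)}$, which I read as $y^{a_L-1}$; the two agree when $k_L\ge k_R$. We now split into three cases.

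\emph{Case $k_L<k_R$.} The $R$-terms already carry $y^{-2|k_R-k_L|}$ and supply $\tau^{R,td}_{00}$ and $\eta^R_{td}$. The $L$-terms carry only $\mu\,y^{0}$. Multiplying and dividing by $y^{2|k_R-k_L|}$ turns them into $\frac{\mu}{y^{2|k_R-k_L|}}\,O(y^{2|k_R-k_L|})$, which is $O(y)$ inside the bracket and can be absorbed into $R_1$ and $S_1$. This yields $\vartheta_{td}=\tau^{R,td}_{00}$ and $\kappa_{td}=\eta^R_{td}$.

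\emph{Case $k_L>k_R$.} The argument is symmetric, with a minus sign coming from $-T^L$ and $-P^L$. This gives $\vartheta_{td}=-\tau^{L,td}_{00}$ and $\kappa_{td}=-\eta^L_{td}$.

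\emph{Case $k_L=k_R$.} Both corrections have exponent $0$, so their leading coefficients subtract directly. This gives $\vartheta_{td}=\tau^{R,td}_{00}-\tau^{L,td}_{00}$ and $\kappa_{td}=\eta^R_{td}-\eta^L_{td}$.

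\emph{Remainders.} In each time series, the terms with $j\ge1$ contribute $O(y)$ inside the bracket. The terms with $i\ge1$ have the form $\mu^{1+i}y^{-(a_J-1)-ia_J+j}$. These are $O(\mu^2)$ as formal coefficients of $\mu^2$, possibly multiplied by negative powers of $y$; they are put into $R_2$, exactly as the $R_2^J$ of the lemmas are treated. On $W_{td}$ we have $\mu/y^{a_J}=O(|y|^{1/2})$, so these terms are moreover genuinely smaller than the leading correction. For the displacement, $\delta(P^R_{td}-P^L_{td})$ yields $D_0$, then $\delta\frac{\mu}{y^{2|k_R-k_L|}}(\kappa_{td}+S_1)$, and finally $S_2=\delta(R_2^R-R_2^L)=O(\mu^2)$.

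\emph{Main obstacle.} The only delicate point is bookkeeping. One must check that the lower-order $J$-term, rescaled to the common denominator, really is $O(y)$. This holds because $|k_R-k_L|\ge1$ in the unequal cases, so $y^{2|k_R-k_L|}$ vanishes at least to order $y$. One must also confirm that the case distinctions in $\tau^{R,td}_{00}$ and $\eta^J_{td}$ match those in $\vartheta_{td}$ and $\kappa_{td}$, which they do by construction.
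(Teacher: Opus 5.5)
Your proposal is correct and follows the paper's route: the paper simply says Corollary~\ref{C2} follows directly from subtracting the expansions in Lemmas~\ref{L2} and~\ref{L3}, and your three-case bookkeeping on the sign of $k_R-k_L$ spells that out. One remark: the exponent $2(k_L-k_R)$ in the $T^L_{td}$ expansion is not a typo (the blow-up $s=\tau\nu z^{b_L}$, $\mu=\nu z^{a_L}$ gives $s=\tau\mu y^{2(k_R-k_L)}$), so for $k_L<k_R$ the $L$-time correction is $O(\mu y^{2(k_R-k_L)})$, even smaller than the $O(\mu)$ you assumed, and your absorption into $R_1$ still goes through.
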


\section{Proofs of the main results}\label{Sec4}

\begin{proof}[Proof of Theorem~\ref{HystereticHLB}]
It follows from Corollary~\ref{C1} and~\eqref{13} that
\begin{equation}\label{42}
	D_h(\mu,y)=V_{2n}y^{2n}+O(y^{2n+1})+\delta\kappa_h\frac{\mu}{y^{2k_M-1}}+\frac{\mu}{y^{2k_M-1}}O(y)+O(\mu^2).
\end{equation}
Consider the blow-up-variables $(\nu,z)$ (see~\cite{AlvFerJar2011}) characterized by
\begin{equation}\label{43}
	\mu=\nu^N, \quad y=\nu z,
\end{equation}
with $N=2(n+k_M)-1$, and notice that it is a well-defined change of variables if $\nu\neq0$. Consider 
\begin{equation}\label{44}
	\mathcal{D}_h(\nu,z)=\frac{z^{2k_M-1}}{\nu^{2n}}D_h(\nu^N,\nu z),
\end{equation}
and notice from~\eqref{42} that $\mathcal{D}$ is analytic in $z\neq0$ and given by,
\[
	\mathcal{D}_h(\nu,z)=V_{2n}z^N+O(\nu z^{N+1})+\delta\kappa_h+O(\nu z)+O(\nu^{2(N-n)}).
\]
Notice now that
\[
	z_0:=\left(\frac{-\delta\kappa_h}{V_{2n}}\right)^{1/N}=\delta\operatorname{sgn}(V_{2n})\left|\frac{\kappa_h}{V_{2n}}\right|^{1/N}\neq0
\]
is the unique real solution of $\mathcal{D}_h(0,z_0)=0$, where we recall $\kappa_h<0$. In particular, $-\delta z_0>0$ if, and only if, $\operatorname{sgn}V_{2n}=-1$. Since 
\[
	\frac{\partial\mathcal{D}_h}{\partial z}(0,z_0)=NV_{2n}z_0^{N-1}\neq0,
\]
it follows from the Implicit Function Theorem that there is a unique analytic function 
\[
	Z(\nu)=\sum_{i\geqslant0}z_i\nu^i,
\]
such that $\mathcal{D}_h\big(\nu,Z(\nu)\big)=0$ and $Z(0)=z_0$. From~\eqref{44} now follows
\[
	D_h\big(\nu^N,\nu Z(\nu)\big)=\nu^{2n}\mathcal{D}_h\big(\nu,Z(\nu)\big)=0.
\]
Therefore, we have from~\eqref{43} that
\[
	\mathcal{P}_h(\mu):=\mu^{1/N}\cdot Z\big(\mu^{1/N}\big)=\sum_{i\geqslant0}z_i\mu^{(1+i)/N}
\]
satisfies $D_h\big(\mu,\mathcal{P}_h(\mu)\big)=0$. Notice that $\mathcal{P}_h(\mu)$ is a well-defined solution of $D_h$ since $N>2k_J+1/2$ for $J\in\{L,M\}$. This proves~\eqref{45}, while~\eqref{46} follows by replacing $y=\mathcal{P}_h(\mu)$ in~\eqref{40} and using~\eqref{49T}.
	
To prove that the periodic orbit associated to $\mathcal{P}_h(\mu)$ is a hyperbolic limit cycle, we notice that $\operatorname{sgn}z_0=-\delta$, $V_{2n}<0$, and $\kappa_h<0$ implies
\begin{equation}\label{55}
	\lim\limits_{\mu\to0^+}\mu^{-\frac{2n-1}{N}}\frac{\partial D_h}{\partial y}\big(\mu,\mathcal{P}_h(\mu)\big)=2nV_{2n}z_0^{2n-1}-\frac{2(2k_M-1)\delta\kappa_h}{z_0^{2k_M}}\neq0.
\end{equation}
We now prove that the limit cycle is attracting. To this end, observe that the sign of~\eqref{55} is equal to $\delta$. Therefore, if for each $\mu>0$ we let $d_\mu\colon(-\varepsilon,\varepsilon)\to\mathbb{R}$ be given by $d_\mu(w)=D_h\big(\mu,\mathcal{P}(\mu)+w\big)$,
then
\begin{equation}\label{56}
	d_\mu(w)=\delta aw+O(w^2)
\end{equation}
is analytic for some $a>0$. Observe now that if $\delta=-1$ (resp. $\delta=1$), then $w>0$ is located in the unbounded (resp. bounded) region limited by the limit cycle, while $w<0$ is located in the bounded (resp. unbounded) region, see Figure~\ref{Fig8}.
\begin{figure}[ht]
	\begin{center}
		\begin{minipage}{6cm}
			\begin{center} 
				\begin{overpic}[width=4.5cm]{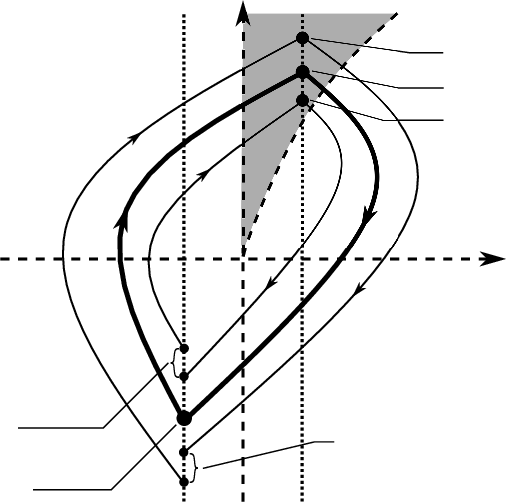} 
					\put(88.5,87.5){\tiny$\tiny w>0$}
					\put(88.5,80.5){\tiny$w=0$}
					\put(88.5,74){\tiny$w<0$}
					\put(97,43){\small$x$}
					\put(49,98){\small$y$}
					\put(17,97){{\tiny$x=-\mu$}}
					\put(60.5,97){{\tiny$x=\mu$}}
					\put(6.75,3.5){\tiny$D=0$}
					\put(4,16){\tiny$D>0$}
					\put(66,10.25){\tiny$D<0$}
				\end{overpic}
					
				$\delta=-1$.
			\end{center}
		\end{minipage}
		\begin{minipage}{6cm}
			\begin{center} 
				\begin{overpic}[width=4.5cm]{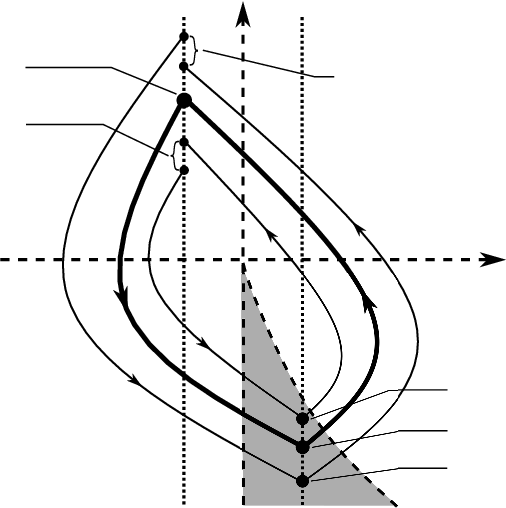} 
					\put(89,21.5){\tiny$\tiny w>0$}
					\put(89,13.75){\tiny$w=0$}
					\put(89,5.75){\tiny$w<0$}
					\put(97,44){\small$x$}
					\put(49.5,98){\small$y$}
					\put(17,97){{\tiny$x=-\mu$}}
					\put(60.5,97){{\tiny$x=\mu$}}
					\put(5,87.5){\tiny$D=0$}
					\put(5,76.5){\tiny$D>0$}
					\put(66.5,83){\tiny$D<0$}
				\end{overpic}
					
				$\delta=1$.
			\end{center}
		\end{minipage}
	\end{center}
\caption{An illustration of the dynamics around the limit cycle. The shaded area is the region $A^R_h \cap A^L_h$, see~\eqref{48A}, with $x>0$ and $\delta y<0$.}\label{Fig8}
\end{figure}
This in addition with~\eqref{56} proves that the limit cycle is attracting.
\end{proof}

\begin{proof}[Proof of Theorem~\ref{TimeDelayedHLB}]
The proof follows similarly to that of Theorem~\ref{HystereticHLB}. From Corollary~\ref{C2} and~\eqref{13},
\begin{equation}\label{42x}
	D_{td}(\mu,y)=V_{2n}y^{2n}+O(y^{2n+1})+\delta\kappa_{td}\frac{\mu}{y^{2|k_R-k_L|}}+\frac{\mu}{y^{2|k_R-k_L|}}O(y)+O(\mu^2).
\end{equation}
Consider the blown-up variables $(\nu,z)$ given by
\begin{equation}\label{43x}
	\mu=\nu^M, \quad y=\nu z,
\end{equation}
where $M=2(n+|k_R-k_L|)$, and notice that this is a well-defined change of variables if $\nu\neq0$. Let 
\begin{equation}\label{44x}
	\mathcal{D}_{td}(\nu,z)=\frac{z^{2|k_R-k_L|}}{\nu^{2n}}D_{td}(\nu^M,\nu z),
\end{equation}
and notice from~\eqref{42x} that $\mathcal{D}_{td}$ is analytic in $z\neq0$ and given by,
\[
	\mathcal{D}_{td}(\nu,z)=V_{2n}z^M+O(\nu z^{M+1})+|\kappa_{td}|+O(\nu z)+O(\nu^{2(M-n)}),
\]
where we have used $\operatorname{sgn}(\kappa_{td})=\delta$. Observe that $\mathcal{D}_{td}(0,z)=0$ has at most one real solution $z_0$ satisfying $-\delta z_0>0$. Moreover, since $M$ is even, such a solution exists if and only if $V_{2n}<0$, and in this case it is given by
\[
	z_0=-\delta\left|\frac{\kappa_{td}}{V_{2n}}\right|^{1/M}\neq0.
\]
Knowing that
\[
	\frac{\partial\mathcal{D}_{td}}{\partial z}(0,z_0)=MV_{2n}z_0^{M-1}\neq0,
\]
it follows from the Implicit Function Theorem that there is a unique analytic function 
\[
	Z(\nu)=\sum_{i\geqslant0}z_i\nu^i,
\]
such that $\mathcal{D}_{td}\big(\nu,Z(\nu)\big)=0$ and $Z(0)=z_0$. This, and~\eqref{44x}, implies~$D\big(\nu^M,\nu Z(\nu)\big)=0$. From~\eqref{43x} it now follows 
\[
	\mathcal{P}_{td}(\mu):=\mu^{1/M}\cdot Z\big(\mu^{1/M}\big)=\sum_{i\geqslant0}z_i\mu^{(1+i)/M}
\]
satisfies $D_{td}\big(\mu,\mathcal{P}_{td}(\mu)\big)=0$. We remark that $\mathcal{P}_{td}(\mu)$ is a well-defined solution of $D_{td}$ since $M>3/2+2|k_R-k_L|$. This proves~\eqref{45x}, while~\eqref{46x} now follows by replacing $y=\mathcal{P}_{td}(\mu)$ in~\eqref{40x} and using~\eqref{49T}.
	
To prove that the periodic orbit associated to $\mathcal{P}_{td}(\mu)$ is a hyperbolic limit cycle, we notice that $\operatorname{sgn}z_0=-\delta$, $V_{2n}<0$, and $\operatorname{sgn}\kappa_{td}=\delta$ implies
\begin{equation}\label{55x}
	\lim\limits_{\mu\to0^+}\mu^{-\frac{2n-1}{M}}\frac{\partial D_{td}}{\partial y}\big(\mu,\mathcal{P}_{td}(\mu)\big)=2nV_{2n}z_0^{2n-1}-2|k_R-k_L|\frac{|\kappa_{td}|}{z_0^{2|k_R-k_L|+1}}\neq0.
\end{equation}
The limit cycle is attracting because the sign of~\eqref{55x} is equal to~$\delta$ so nearby orbits behave analogously to those in Figure~\ref{Fig8} in the hysteretic case.

It follows from the definitions of the half-return maps and displacement function provided in Section~\ref{Sec3.2} that the periodic orbits studied in this proof intersect $\Sigma$ in exactly two points $p$,~$q$, and that between them the system switches from $X^L$ to $X^R$ or vice-versa exactly once. We now prove that these conditions are not restrictive in the sense that any local periodic orbit satisfies these conditions

To this end, recall that $\Phi^J(t,x,y)$ denotes the solution of $X^J$ with $\Phi^J(0,x,y)=(x,y)$ and consider the sets
\[\begin{split}
	\eta_L=\{\Phi^L(\mu,0,-\delta y)\colon 0\leqslant y\leqslant\varepsilon\}, &\quad \eta_R=\{\Phi^R(\mu,0,\delta y)\colon 0\leqslant y\leqslant\varepsilon\},
	\vspace{0.2cm} \\
	\Sigma^L=\{(x,y)\in U\colon x\leqslant 0\}, &\quad \Sigma^R=\{(x,y)\in U\colon x\geqslant 0\},	
\end{split}\]
see Figure~\ref{Fig11}.
\begin{figure}[ht]
	\begin{center}
		\begin{minipage}{6cm}
			\begin{center} 
				\begin{overpic}[width=4.5cm]{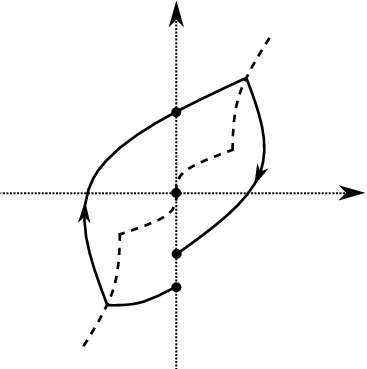} 
					\put(96.5,42.5){$x$}
					\put(50,97){$y$}
					\put(55,52){$\gamma_L$}
					\put(74,87){$\eta_L$}
					\put(34,42){$\gamma_R$}
					\put(12,6){$\eta_R$}
					\put(43,71){$p$}
					\put(37,30){$p_R$}
					\put(50,20.5){$p_L$}
					\put(51,85){$A^L$}
					\put(85,5){$B^R$}
					\put(34,5){$A^R$}
					\put(5,85){$B^L$}
				\end{overpic}
				
				$\delta=-1$.
			\end{center}
		\end{minipage}
		\begin{minipage}{6cm}
			\begin{center} 
				\begin{overpic}[width=4.5cm]{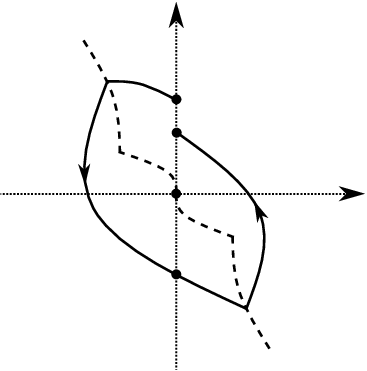} 
					\put(96.5,42.5){$x$}
					\put(50,97){$y$}
					\put(55,41){$\gamma_L$}
					\put(74,5){$\eta_L$}
					\put(34,51){$\gamma_R$}
					\put(12,87){$\eta_R$}
					\put(42,21){$p$}
					\put(37,63.25){$p_R$}
					\put(50,72){$p_L$}
					\put(51,5){$A^L$}
					\put(85,85){$B^R$}
					\put(32,85){$A^R$}
					\put(5,5){$B^L$}
				\end{overpic}
				
				$\delta=1$.
			\end{center}
		\end{minipage}
	\end{center}
\caption{An illustration of the orbits considered in the proof of Theorem~\ref{TimeDelayedHLB}. The second intersection point $q$ occurs when $p_L=p_R$.}\label{Fig11}
\end{figure}
Notice that $\eta_J\cap\Sigma^J=\emptyset$ and thus every orbit passing through $\Sigma$ cannot return unless it switches from $X^L$ to $X^R$ or vice-versa exactly once. Consider now
\[
	\gamma_J=\{\Phi^J(t,0,0)\colon 0\leqslant t\leqslant\mu\}, \quad \Gamma^J=\gamma_J\cup\eta_J,
\]
let $A^R$ (resp. $A^L$) be the region between $\Sigma$ and $\Gamma^R$ (resp. $\Gamma^L$), and let $B^R$ (resp. $B^L$) be the complement of $A^L$ (resp. $A^R$) in $\Sigma^R$ (resp. $\Sigma^L$). Notice that the dynamics on $A^J\cup B^J$ is governed by $X^J$ and thus we have uniqueness of solutions on these regions. In particular, any periodic orbit will intersect $\Sigma$ at exactly two points $p$ and $q$.
\end{proof}

\appendix

\section{Proofs of the technical lemmas}\label{AppA}

\begin{proof}[Proof of Lemma~\ref{L1}]
	
The proof is divided into five steps. In Step 1 we use the series expansion~\eqref{2} of $X^J$ to express the solution to $X^J$ in a neighborhood of the origin. In Step 2 we use this solution to derive expressions for $T_0^J$ and $P_0^J$, defined by~\eqref{50} and~\eqref{51}. In Step 3 we use the blow-up technique to extend these functions to $T^J_h$ and $P^J_h$, for hysteresis. In Step 4 we compute a bound that shows $T^J_h$ and $P^J_h$ are analytic in $A^J_h$. Finally, in Step 5 we prove that $T^J_h$ and $P^J_h$ indeed correspond to the maps defined geometrically in Section~\ref{Sec3.1}.

\bigskip

\noindent {\bf Step 1: Asymptotic expansion of the solution to \boldmath{$X^J$}.}
For each $J\in\{L,R\}$, let 
\[
	\Phi^J(t,x,y)=(\varphi^J(t,x,y),\psi^J(t,x,y))
\]
denote the solution to $X^J$, given by~\eqref{1} and~\eqref{2}, with initial condition $\Phi^J(0,x,y)=(x,y)$. This solution is analytic because $X^J$ is analytic~\cite{Per2001}*{Section~$2.3$}, and thus we can consider its expansion around $(0,0,0)$. Knowing that $\Phi^J$ satisfies
\begin{equation}\label{4}
	\frac{\partial\Phi^J}{\partial t}(t,x,y)=X^J\circ\Phi^J(t,x,y),
\end{equation}
it follows 
\begin{equation}\label{5}
	\frac{\partial^2\Phi^J}{\partial t^2}(t,x,y)=DX^J\big(\Phi^J(t,x,y)\big)\cdot\big(X^J\circ\Phi^J(t,x,y)\big),
\end{equation}
where $DX^J$ denotes the Jacobian matrix of $X^J$ in relation to $(x,y)$. By substituting $\Phi^J(0,x,y)=(x,y)$ in~\eqref{4} and~\eqref{5} we conclude that 
\begin{equation}\label{6}
	\Phi^J(t,x,y)=(x,y)+X^J(x,y)t+DX^J(x,y)\cdot X(x,y)\frac{t^2}{2}+O(t^3),
\end{equation}
and this extends to
\begin{equation}\label{7}
	\Phi^J(t,x,y)=\sum_{i\geqslant0}\frac{F_i^J(x,y)}{i!}t^i,
\end{equation}
with
\begin{equation}\label{47}
	F_i^J(x,y)=DF_{i-1}^J(x,y)\cdot X^J(x,y), \quad i\geqslant1, \quad \textnormal{and} \quad F_0^J(x,y)=(x,y).
\end{equation}
Therefore, we can use~\eqref{47} to recursively calculate $F^J_i$, and then use~\eqref{7} to obtain the expansion of $\Phi^J$. For example, it follows from~\eqref{47} that $F_1^J=X^J=(f^J,g^J)$, and
\begin{equation}\label{9}
	F_2^J=DX^J\cdot X^J=\left(\begin{array}{cc}
										f^J_x & f^J_y
										\vspace{0.1cm} \\
										g^J_x & g^J_y
								\end{array}\right)
								\left(\begin{array}{c}
									f^J 
									\vspace{0.1cm} \\
									g^J
								\end{array}\right)=
								\left(\begin{array}{c}
									f^J_xf+f^J_yg 
									\vspace{0.1cm} \\
									g^J_xf+g^J_yg 
								\end{array}\right),							
\end{equation}
with the subscripts denoting partial derivatives.
By replacing $F^J_1=(f^J,g^J)$ and~\eqref{9} in~\eqref{6}, and taking only the first component, we obtain
\begin{equation}\label{38}
	\begin{split}
		\displaystyle \varphi^J(t,x,y) &\displaystyle= x+f^J(x,y)t 		
		\vspace{0.2cm} \\		
		&\displaystyle+ \Big(f^J_x(x,y)f^J(x,y)+f^J_y(x,y)g^J(x,y)\Big)\frac{t^2}{2}+O(t^3)
		\vspace{0.2cm} \\			
		&\displaystyle =x+a_{00}^Jt+a_{10}^Jtx+a_{01}^Jty+(a_{10}^Ja_{00}^J+a_{01}^Jb_{00}^J)\frac{t^2}{2}
		\vspace{0.2cm} \\		
		&\displaystyle+ t\sum_{i+j+k\geqslant2}\alpha_{ijk}^Jt^ix^jy^k.
	\end{split}
\end{equation}
In the case $k_J=1$, we have $a_{00}^J=0$, so~\eqref{38} reduces to
\[
	\varphi^J(t,x,y)=x+a_{01}^Jty+\frac{a_{01}^Jb_{00}^J}{2}t^2+r^J(t,x,y),
\]
where 
\[
	\displaystyle r^J(t,x,y) = a_{10}^Jtx+t\sum_{i+j+k\geqslant2}\beta_{ijk}^Jt^ix^jy^k.
\]
For general $k_J\geqslant1$, we have $a_{0,i}^J=0$ for all $i\in\{0,\dots,2k_J-2\}$ (Lemma \ref{LC}), and it is a simple exercise to obtain from the above formulas
\begin{equation}\label{10}
	\varphi^J(t,x,y) = x+a_{0,2k_J-1}^J h^J(t,y)+R^J(t,x,y),
\end{equation}
where
\begin{equation}\label{11}
	\begin{split}
		\displaystyle h^J(t,y)  &\displaystyle= ty^{2k_J-1}+\frac{(2k_J-1)}{2!}b_{00}^Jt^2y^{2k_J-2}
		\vspace{0.2cm} \\
		&\displaystyle+ \frac{(2k_J-1)(2k_J-2)}{3!}(b_{00}^J)^2t^3y^{2k_J-3}+\ldots+\frac{(2k_J-1)!}{(2k_J)!}(b_{00}^J)^{2k_J-1}t^{2k_J},
	\end{split}
\end{equation}
and
\begin{equation}\label{12}
	\displaystyle R^J(t,x,y) = tx\sum_{i+j+k=0}^{2k_J-2}\alpha_{ijk}^Jt^ix^jy^k+t\sum_{i+j+k\geqslant2k_J}\beta_{ijk}^Jt^ix^jy^k.
\end{equation}
We remark that throughout the paper we use $\alpha_{ijk}^J$, $\beta_{ijk}^J\in\mathbb{R}$ to denote coefficients of higher order terms that do not need our full attention. The appearing of such coefficients in different equations do not mean that they are the same. For example, the $\alpha_{ijk}^J$ appearing in~\eqref{38} and~\eqref{12} are not necessarily equal.

\bigskip

\noindent {\bf Step 2: Derivation of \boldmath{$T_0^J$} and \boldmath{$P_0^J$}.}
The hysteretic evolution time function $T^J_h$ is characterized by
\[
	\varphi^J\big(T^J_h(\mu,y),\mu,y\big)=-\mu.
\]
Hence, to obtain $T^J_h$ it would be desirable to apply the Implicit Function Theorem in~\eqref{10} for $t$. However, since $X^J$ is not transversal to $y=0$ at the origin, it follows that~\eqref{10} has no linear term in $t$ independent of $\mu$ and $y$. To bypass this we will consider a different set of coordinates, based on blow-up theory~\cite{AlvFerJar2011}, and apply the Implicit Function Theorem on those coordinates. To this end, let
\[
	F^J(t,\mu,y):=\varphi^J(t,\mu,y)+\mu.
\]
We seek to solve $F^J(t,\mu,y)=0$ in $t$. From~\eqref{10}, we have
\begin{equation}\label{15}
	F^J(t,\mu,y)=2\mu+a_{0,2k_J-1}^J h^J(t,y)+R^J(t,\mu,y),
\end{equation}
with $h^J$ and $R^J$ given by~\eqref{11} and~\eqref{12}. Notice that $F^J(t,0,y)=0$ has $t=0$ as a trivial solution. However, it follows from~\cite{NovSil2022} that there there is a second solution $T_0(y)$. For completeness, we now obtain it here. First, consider the blow-up variables $(\zeta,z)$ characterized by $(t,y)=(\zeta z,z)$ and notice that this is a valid change of variables if $y\neq0$. In these new variables we have from~\eqref{10},~\eqref{11} and~\eqref{12} that
\begin{equation}\label{8}
	\varphi^J(\zeta z,0,z) = a_{0,2k_J-1}^Jz^{2k_J}g^J(\zeta)+\zeta z\sum_{i+k\geqslant2k_J}\beta_{i,0,k}^J\zeta^iz^{i+k},
\end{equation}
where 
\begin{equation}\label{17}
	\begin{split}
		\displaystyle g^J(\zeta) &\displaystyle= \zeta+\frac{(2k_J-1)}{2!}b_{00}^J\zeta^2
		\vspace{0.2cm} \\
		&\displaystyle +\frac{(2k_J-1)(2k_J-2)}{3!}(b_{00}^J)^2\zeta^3+\ldots+\frac{(2k_J-1)!}{(2k_J)!}(b_{00}^J)^{2k_J-1}\zeta^{2k_J}.
	\end{split}
\end{equation}
Since $i+k\geqslant 2k_J$ in the summation of~\eqref{8}, it follows that~\eqref{8} has $z^{2k_J}$ as a factor and thus
\begin{equation}\label{16}
	\phi^J(\zeta,z):=\frac{\varphi^J(\zeta z,0,z)}{z^{2k_J}}=a_{0,2k_J-1}^Jg^J(\zeta)+\zeta z\sum_{i+k\geqslant2k_J}\beta_{i,0,k}^J\zeta^iz^{i+j-2k_J},
\end{equation}
is an analytic function. From~\eqref{17} we have
\begin{equation}\label{18}
	g^J(\zeta)=\frac{1}{2k_Jb_{00}^J}\sum_{i=1}^{2k_J}\binom{2k_J}{i}(b_{00}^J\zeta)^i=\frac{1}{2k_Jb_{00}^J}\big((b_{00}^J\zeta+1)^{2k}-1\big),
\end{equation}
with the last equality following from the Binomial Theorem. In particular, it follows from~\eqref{18} that $g^J(\zeta)=0$ if, and only if, $b_{00}^J\zeta+1=\pm1$. This in turn implies $\zeta=0$ or $\zeta=-2/b_{00}^J$. Let~$\zeta_0^J=-2/b_{00}^J$ and notice from~\eqref{16} and~\eqref{18} that
\[
	\phi^J(\zeta_0^J,0)=0, \quad \frac{\partial\phi^J}{\partial\zeta}(\zeta_0^J,0)=-a_{0,2k_J-1}\neq0.
\]
Thus by the Implicit Function Theorem there is a unique analytic function
\[
	\zeta^J(z)=\sum_{i\geqslant0}\zeta_i^Jz^i,
\] 
such that $\phi^J(\zeta^J(z),z)=0$ and $\zeta^J(0)=\zeta_0^J$. In particular, if $z\neq0$ (i.e. $y\neq0$), then it follows from~\eqref{16} that $\varphi^J(y\zeta(y),0,y)=0$. In particular, it follows that
\begin{equation}\label{19}
	T_0^J(y)=y\zeta^J(y)=-\frac{2}{b_{00}^J}y+O(y^2)
\end{equation}
is an analytic solution of
\begin{equation}\label{20}
	F^J(T_0^J(y),0,y)=0.
\end{equation}
We can then conclude from~\eqref{12},~\eqref{15} and~\eqref{20} that
\begin{equation}\label{48}
	\begin{array}{l}
		\displaystyle a_{0,2k_J-1}^Jh^J\big(T_0^J(y),y\big)+R^J\big(T^J(y),0,y\big)
		\vspace{0.2cm} \\
		\displaystyle \qquad\qquad\quad =a_{0,2k_J-1}^Jh^J\big(T_0^J(y),y\big)+T_0^J(y)\sum_{i+k\geqslant2k_J}\beta_{i,0,k}^J\big(T^J_0(y)\big)^iy^k=0.
	\end{array}
\end{equation}

\bigskip

\noindent {\bf Step 3: Derivation of \boldmath{$T^J_h$} and \boldmath{$P^J_h$}.} To extend $T_0^J(y)$ to a solution of $F^J(t,\mu,y)=0$ with $\mu\neq0$, we substitute the form $t=T_0^J(y)+s$ in~\eqref{15} and consider the function
\begin{equation}\label{21}
	H^J(s,\mu,y):=F^J(T_0^J(y)+s,\mu,y).
\end{equation}
We now work on the expansion of $H^J$. To this end, first observe from~\eqref{11} that
\begin{equation}\label{22}
	h^J(t,y)=\frac{1}{2k_Jb_{00}^J}\sum_{i=1}^{2k_J}\binom{2k_J}{i}(b_{00}^Jt)^iy^{2k_J-i}.
\end{equation}
From~\eqref{22} and the Binomial Theorem we obtain,
\begin{equation}\label{23}
	\begin{split}
		h^J\big(T_0^J(y)+s,y\big) &\displaystyle= \frac{1}{2k_Jb_{00}^J}\sum_{i=1}^{2k_J}\binom{2k_J}{i}(b_{00}^J)^i\big(T_0^J(y)+s\big)^iy^{2k_J-i}
		\vspace{0.2cm} \\			
		&\displaystyle= 	\frac{1}{2k_Jb_{00}^J}\sum_{i=1}^{2k_J}\binom{2k_J}{i}(b_{00}^J)^iy^{2k_J-i}\sum_{j=0}^{i}\binom{i}{j}s^jT_0^J(y)^{i-j}
		\vspace{0.2cm} \\			
		&\displaystyle= 	\frac{1}{2k_Jb_{00}^J}\sum_{i=1}^{2k_J}\binom{2k_J}{i}(b_{00}^J)^iy^{2k_J-i}\Bigg(T_0^J(y)^i \Bigg.
		\vspace{0.2cm} \\			
		&\displaystyle \qquad\qquad\qquad\quad \Bigg.+i\cdot s\cdot T_0^J(y)^{i-1}+\sum_{j=2}^{i}\binom{i}{j}s^jT_0^J(y)^{i-j}\Bigg)
		\vspace{0.2cm} \\			
		&\displaystyle= 	\frac{1}{2k_Jb_{00}^J}\sum_{i=1}^{2k_J}\binom{2k_J}{i}\big(b_{00}^JT_0^J(y)\big)^iy^{2k_J-i}
		\vspace{0.2cm} \\			
		&\displaystyle+ 	\frac{s}{2k_J}\sum_{i=1}^{2k_J}\binom{2k_J}{i}i\big(b_{00}^JT_0^J(y)\big)^{i-1}y^{2k_J-i}
		\vspace{0.2cm} \\			
		&\displaystyle+ \frac{1}{2k_Jb_{00}^J}\sum_{i=2}^{2k_J}\binom{2k_J}{i}(b_{00}^J)^iy^{2k_J-i}\sum_{j=2}^{i}\binom{i}{j}s^jT_0^J(y)^{i-j}.
	\end{split}
\end{equation}
Notice from~\eqref{22} that
\begin{equation}\label{24}
	\frac{1}{2k_Jb_{00}^J}\sum_{i=1}^{2k_J}\binom{2k_J}{i}\big(b_{00}^JT_0^J(y)\big)^iy^{2k_J-i}=h^J\big(T_0^J(y),y\big).
\end{equation}
Knowing $\binom{2k_J}{i}i=2k_J\binom{2k_J-1}{i-1}$ and letting $l=i-1$, we obtain from the Binomial Theorem,
\begin{equation}\label{25}
	\begin{array}{l}
		\displaystyle \frac{s}{2k_J}\sum_{i=1}^{2k_J}\binom{2k_J}{i}i\big(b_{00}^JT_0^J(y)\big)^{i-1}y^{2k_J-i}
		\vspace{0.2cm} \\
			
		\displaystyle\qquad =s\sum_{l=0}^{2k_J-1}\binom{2k_J-i}{l}\big(b_{00}^JT_0^J(y)\big)^ly^{2k_J-1-l}=s\cdot\big(b_{00}^JT_0^J(y)+y\big)^{2k_J-1}.
	\end{array}
\end{equation}
Replacing~\eqref{19} at~\eqref{25} we have,
\begin{equation}\label{26}
	\frac{s}{2k_J}\sum_{i=1}^{2k_J}\binom{2k_J}{i}i\big(b_{00}^JT_0^J(y)\big)^{i-1}y^{2k_J-i}=-sy^{2k_J-1}+s\cdot O(y^{2k_J}),
\end{equation}
and we let
\begin{equation}\label{27}
	R_1^J(s,y):=\frac{1}{2k_Jb_{00}^J}\sum_{i=2}^{2k_J}\binom{2k_J}{i}(b_{00}^J)^iy^{2k_J-i}\sum_{j=2}^{i}\binom{i}{j}s^jT_0^J(y)^{i-j}.
\end{equation}
By then substituting \eqref{24},~\eqref{26} and~\eqref{27} into~\eqref{23} we conclude,
\begin{equation}\label{28}
	h^J\big(T_0^J(y)+s,y\big)=h^J\big(T_0^J(y),y\big)-sy^{2k_J-1}+s\cdot O(y^{2k_J})+R_1^J(s,y).
\end{equation}
Next, from~\eqref{15},~\eqref{21} and~\eqref{28} we obtain
\begin{equation}\label{29}
	\begin{split}
		\displaystyle H^J(s,\mu,y) &\displaystyle= 2\mu-a_{0,2k_J-1}^Jsy^{2k_J-1}+s\cdot O(y^{2k_J})
		\vspace{0.2cm} \\
		&\displaystyle +a_{0,2k_J-1}^JR_1^J(s,y)+ a_{0,2k_J-1}^Jh^J\big(T_0^J(y),y\big)+R_2^J(s,\mu,y),
	\end{split}
\end{equation}
with $R_2^J(s,\mu,y):=R^J\big(T_0^J(y)+s,\mu,y)$. It follows from~\eqref{12} that
\begin{equation}\label{30}
	\begin{split}
		\displaystyle R_2^J(s,\mu,y) &\displaystyle= \underbrace{\big(T_0^J(y)+s\big)\mu\sum_{i+j+k=0}^{2k_J-2}\alpha_{ijk}^J\big(T_0^J(y)+s\big)^i\mu^jy^k}_{r_1^J(s,\mu,y)}
		\vspace{0.2cm} \\
		&\displaystyle+ \underbrace{\big(T_0^J(y)+s\big)\sum_{i+j+k\geqslant2k_J}\beta_{ijk}^J\big(T_0^J(y)+s\big)^i\mu^jy^k}_{r_2^J(s,\mu,y)}.
	\end{split}
\end{equation}
Notice that we can divide $r_2^J(s,\mu,y)$ as
\begin{equation}\label{52}
	\begin{split}
		\displaystyle r_2^J(s,\mu,y) &\displaystyle= \underbrace{s\sum_{i+j+k\geqslant2k_J}\beta_{ijk}^J\big(T_0^J(y)+s\big)^i\mu^jy^k}_{r_3^J(s,\mu,y)} + \underbrace{T_0^J(y)\sum_{i+j+k\geqslant2k_J}\beta_{ijk}^J\big(T_0^J(y)+s\big)^i\mu^jy^k}_{r_4^J(s,\mu,y)}.
	\end{split}
\end{equation}
Now, we apply the Binomial Theorem in $r_4^J(s,\mu,y)$ and divide it as
\begin{equation}\label{53}
	\begin{split}
		\displaystyle r_4^J(s,\mu,y) &\displaystyle= T_0^J(y)\sum_{i+j+k\geqslant2k_J}\beta_{ijk}^J\left[\sum_{\ell=0}^{i}\binom{i}{\ell}\big(T_0^J(y)\big)^{i-\ell}s^\ell\right]\mu^jy^k
		\vspace{0.2cm} \\	
		&\displaystyle=  \underbrace{T^J_0(y)\cdot s\sum_{i+j+k\geqslant2k_J}\beta_{ijk}^J\left[\sum_{\ell=1}^{i}\binom{i}{\ell}\big(T_0^J(y)\big)^{i-\ell}s^{\ell-1}\right]\mu^jy^k}_{r_5^J(s,\mu,y)}
		\vspace{0.2cm} \\ 	
		&\displaystyle+ \underbrace{T^J_0(y)\sum_{i+j+k\geqslant2k_J}\beta_{ijk}^J\big(T_0^J(y)\big)^i\mu^jy^k}_{r_6^J(\mu,y)}.
	\end{split}
\end{equation}
Finally, we divide $r_6^J(\mu,y)$ as
\begin{equation}\label{58}
	\begin{split}
		\displaystyle r_6^J(\mu,y) &\displaystyle= \underbrace{T^J_0(y)\sum_{\substack{i+j+k\geqslant2k_J \\ j\geqslant1}}\beta_{ijk}^J\big(T_0^J(y)\big)^i\mu^jy^k}_{r_7^J(\mu,y)} + \underbrace{T^J_0(y)\sum_{i+k\geqslant2k_J}\beta_{i,0,k}^J\big(T_0^J(y)\big)^iy^k}_{r_8^J(\mu,y)}.
	\end{split}	
\end{equation}
Notice that $r_8^J$ is precisely the last summation in~\eqref{48}. Hence, replacing~\eqref{30}, \eqref{52}, \eqref{53} and~\eqref{58} in~\eqref{29}, and using~\eqref{48} to cancel $a_{0,2k_J-1}^Jh^J\big(T_0^J(y),y\big)$ with $r_8^J$, we obtain
\begin{equation}\label{59}
	\begin{split}
		\displaystyle H^J(s,\mu,y) &\displaystyle= 2\mu-a_{0,2k_J-1}^Jsy^{2k_J-1}+s\cdot O(y^{2k_J})+a_{0,2k_J-1}^JR_1^J(s,y)
		\vspace{0.2cm} \\
		&\displaystyle +r_1^J(s,\mu,y)+r_3^J(s,\mu,y)+r_5^J(s,\mu,y)+r_7^J(\mu,y).
	\end{split}
\end{equation}
Consider now the blow-up-variables $(\tau,\nu,z)$ characterized by
\begin{equation}\label{31}
	s=\tau\nu z, \quad \mu=\nu z^{2k_J}, \quad y=z,
\end{equation}
and notice that~\eqref{31} is a well-defined change of variables if $\mu y\neq0$, with inverse 
\begin{equation}\label{32}
	\tau=\frac{sy^{2k_J-1}}{\mu}, \quad \nu=\frac{\mu}{y^{2k_J}}, \quad z=y.
\end{equation}
We now show that the function
\begin{equation}\label{33}
	\mathcal{H}^J(\tau,\nu,z):=\frac{H^J(\tau\nu z,\nu z^{2k_J},z)}{\nu z^{2k_J}},
\end{equation}
is analytic. Indeed, it follows from~\eqref{59} that
\begin{equation}\label{60}
	\begin{split}
		\displaystyle H^J(\tau\nu z,\nu z^{2k_J},z) &\displaystyle= 2\nu z^{2k_J}-a_{0,2k_J-1}^J\tau\nu z^{2k_J}+\tau\nu z O(z^{2k_J})
		\vspace{0.2cm} \\
		&\displaystyle+ a_{0,2k_J-1}^JR_1^J(\tau\nu z,z)+r_1^J(\tau\nu z,\nu z^{2k_J},z)
		\vspace{0.2cm} \\
		&\displaystyle+ r_3^J(\tau\nu z,\nu z^{2k_J},z)+r_5^J(\tau\nu z,\nu z^{2k_J},z)+r_7^J(\nu z^{2k_J},z).
	\end{split}
\end{equation}
From~\eqref{19} and~\eqref{27} we have
\[\begin{split}
	\displaystyle R_1^J(\tau\nu z,z) &\displaystyle= \frac{1}{2k_Jb_{00}^J}\sum_{i=2}^{2k_J}\binom{2k_J}{i}(b_{00}^J)^iz^{2k_J-i}\sum_{j=2}^{i}\binom{i}{j}(\tau\nu z)^j\left(-\frac{2}{b_{00}^J}z+O(z^2)\right)^{i-j}
	\vspace{0.2cm} \\
	&\displaystyle= \frac{1}{2k_Jb_{00}^J}\sum_{i=2}^{2k_J}\binom{2k_J}{i}(b_{00}^J)^iz^{2k_J-i}\sum_{j=2}^{i}\binom{i}{j}(\tau\nu)^jz^jz^{i-j}\left(-\frac{2}{b_{00}^J}+O(z)\right)^{i-j}
	\vspace{0.2cm} \\
	&\displaystyle= \frac{1}{2k_Jb_{00}^J}\sum_{i=2}^{2k_J}\binom{2k_J}{i}(b_{00}^J)^iz^{2k_J-i}(\tau\nu)^2z^i\sum_{j=2}^{i}\binom{i}{j}(\tau\nu)^{j-2}\left(-\frac{2}{b_{00}^J}+O(z)\right)^{i-j}
	\vspace{0.2cm} \\
	&\displaystyle= \frac{\tau^2\nu^2z^{2k_J}}{2k_Jb_{00}^J}\sum_{i=2}^{2k_J}\binom{2k_J}{i}(b_{00}^J)^i\sum_{j=2}^{i}\binom{i}{j}(\tau\nu)^{j-2}\left(-\frac{2}{b_{00}^J}+O(z)\right)^{i-j},
\end{split}\]
so $R_1^J(\tau\nu z,z)\in O(\tau^2 \nu^2 z^{2k_J})$. Similarly, it is not hard to see from~\eqref{30}, \eqref{52}, \eqref{53} and~\eqref{58} that $r_i^J\in O(\nu z^{2k_J+1})$, $i\in\{1,3,5,7\}$. This, in addition with~\eqref{60}, proves that $\mathcal{H}^J$ is analytic and of the form,
\begin{equation}\label{34}
	\mathcal{H}^J(\tau,\nu,z)=2-a_{0,2k_J-1}^J\tau+\tau O(z)+O(\tau^2\nu)+O(z).
\end{equation}
Now, notice that if we let $\tau_{00}^{J,h}=2/a_{0,2k_J-1}^J$, then it follows from~\eqref{34} that
\[
	\mathcal{H}^J(\tau_{00}^{J,h},0,0)=0, \quad \frac{\partial\mathcal{H}^J}{\partial\tau}(\tau_{00}^{J,h},0,0)=-a_{0,2k_J-1}^J\neq0.
\]
Therefore, we have from the Implicit Function Theorem that there is a unique analytic function 
\begin{equation}\label{14}
	\tau^J_h(\nu,z)=\sum_{i,j\geqslant0}\tau_{ij}^{J,h}\nu^iz^j,
\end{equation}
such that $\mathcal{H}^J\big(\tau^J_h(\nu,z),\nu,z\big)=0$, and $\tau^J_h(0,0)=\tau_{00}^{J,h}$. From~\eqref{33} we have
\begin{equation}\label{35}
	H^J\big(\tau^J_h(\nu,z)\nu z,\nu z^{2k_J},z\big)=\nu z^{2k_J}\mathcal{H}^J\big(\tau^J_h(\nu,z),\nu,z\big)=0.
\end{equation}
Back to our original variables, we conclude from~\eqref{32} and~\eqref{35} that
\[
	H^J\left(\tau^J_h\left(\frac{\mu}{y^{2k_J}},y\right)\frac{\mu}{y^{2k_J-1}},\mu,y\right)=0.
\]
From~\eqref{21} follows
\[
	F^J\left(T_0^J(y)+\tau^J_h\left(\frac{\mu}{y^{2k_J}},y\right)\frac{\mu}{y^{2k_J-1}},\mu,y\right)=0,
\]
which in turn implies that $T^J_h$ exists and is given by
\[
	T^J_h(\mu,y)=T_0^J(y)+\tau^J_h\left(\frac{\mu}{y^{2k_J}},y\right)\frac{\mu}{y^{2k_J-1}},
\]
proving~\eqref{HalfPeriod}. Equation~\eqref{HalfPoincare} now follows from~\eqref{HalfPeriod} in addition with
\begin{equation}\label{13x}
	\psi^J(t,x,y)=y+b_{00}^Jt+t\sum_{i+j+k\geqslant1}\alpha_{ijk}^Jt^ix^jy^k,
\end{equation}
which can be obtained similarly to~\eqref{38}. 

\bigskip

\noindent {\bf Step 4: Proof that \boldmath{$T_0^J$} and \boldmath{$P_0^J$} are analytic.} We recall from the Implicit Function Theorem that~\eqref{14} is defined in neighborhood of $(\nu,z)=(0,0)$. Let $\varepsilon_0>0$ small enough such that
\[
	B:=\big\{(\nu,z)\in\mathbb{R}^2\colon \sqrt{|\nu|^2+|z|^2}<\varepsilon_0\big\}
\]
is contained in such a neighborhood. It follows from~\eqref{48A} that if $(x,y)\in A^J_h$, then $y\neq0$ and
\[
	\frac{|x|}{|y|^{2k_J}}<|y|^\frac{1}{2}.
\]
This in addition with~\eqref{32} implies $|\nu|^2<|z|$. Hence, if $(x,y)\in A^J_h$, then
\[
	\sqrt{|\nu|^2+|z|^2}<\sqrt{|z|+|z|^2}<\sqrt{2|z|},
\]
with the last inequality following from $\varepsilon_0>0$ small enough. Thus, if we take $\varepsilon>0$ small enough in~\eqref{48A}, then $(x,y)\in A^J_h$ implies $(\nu,z)\in B$. In particular, $T^J_h$ and $P^J_h$ are well defined and analytic in $A^J_h$.

\bigskip

\noindent {\bf Step 5: \boldmath{$T_0^L$}, \boldmath{$P_0^L$} are associated with the second intersection point.}
Observe from Lemma~\ref{LC} that we can write
\begin{equation}\label{54}
	f^J(x,y)=x\xi^J(x,y)+a_{0,2k_J-1}^Jy^{2k_J-1}+O(y^{2k_J}).
\end{equation}
We have from~\eqref{48A} that if $(x,y)\in A^J_h$, then $x\in o(y^{2k_J})$. Replacing this in~\eqref{54} gives
\[
	f^J|_{A^J_h}(x,y)=a_{0,2k_J-1}^Jy^{2k_J-1}+O(y^{2k_J}).
\]
In particular, if $\varepsilon>0$ is small enough, then it is non-zero for every $(x,y)\in A^J_h$, with its sign depending on the signs of $a_{0,2k_J-1}^J$ and $y$. Therefore, the solution of $X^J$ is transversal to every vertical line $x=\mu$ within $A^J_h$. Such a transversality ensures (as claimed in Section~\ref{Sec3.1}) that if $(\mu,y)\in W^J_h$, then the orbit of $X^J$ through $(\mu,y)$ only has two intersections with $x=+\mu$ and two with $x=-\mu$, within $A^J_h$. Moreover, if $J=R$, then $T^R_h$ is indeed the evolution time of the first intersection point $p_R$ of the forward orbit of $X^R$ through $(\mu,y)$, see Figure~\ref{Fig17}.

To see that $T^L_h$ is associated with the second intersection point $p_L$ of the backward orbit of $X^L$ through $(\mu,y)$, we observe that the same technique used to extend $T_0^J(y)$ to $T^J_h(\mu,y)$, can also be used to extend the trivial solution $t=0$ of~\eqref{15}, obtaining a second evolution time function $T^J_1\colon A^J_h\to\mathbb{R}$. In the case $J=L$, we will see that such a function is associated with the first intersection point $p_*$ with $x=-\mu$.

The extension of $t=0$ to $T^J_1$ is similar to that from $T_0^J$ to $T^J_h$. The main difference is that instead of writing $t=T_0^J(y)+s$ and defining~\eqref{21}, we skip directly to the blow-up coordinates, by considering the function
\[
	\mathcal{F}^J_1(\tau,\nu,z)=\frac{F^J(\tau\nu z,\nu z^{2k_J},z)}{\nu z^{2k_J}}.
\]
Similar to~\eqref{33}, $\mathcal{F}_1$
is analytic and given by
\[
	\mathcal{F}^J_1(\tau,\nu,z)=2+a_{0,2k_J-1}^J\tau+\tau O(\tau\nu z)+O(\tau\nu z)+O(\tau z).
\]
Then using $\tau_{00}^{J,h}=2/a_{0,2k_J-1}$ we obtain
\[
	\mathcal{F}^J_1(-\tau_{00}^{J,h},0,0)=0, \quad \frac{\partial\mathcal{F}^J_1}{\partial\tau}(-\tau_{00}^{J,h},0,0)=a_{0,2k_J-1}^J\neq0,
\]
and by the Implicit Function Theorem,
\begin{equation}\label{61}
	T^J_1(\mu,y)=\frac{\mu}{y^{2k_J-1}}\sum_{i+j=0}^{\infty}\tau_{ij}^{J,h,1}\left(\frac{\mu}{y^{2k_J}}\right)^iy^j,
\end{equation}
with $\tau_{00}^{J,h,1}=-\tau_{00}^{J,h}$. Moreover, the respective
map $P^J_1$ is also analytic on $A^J$ and given by
\[
	P^J_1(\mu,y)=y+\frac{\mu}{y^{2k_J-1}}\left(-\eta^J_h+R_1^{J,1}(y)\right)+R_2^{J,1}(\mu,y),
\]
with $R_1^{J,1}\in O(y)$, $R_2^{J,1}\in O(\mu^2)$, and $\eta^J_h$ given by~\eqref{HalfPoincare}.

Notice from Lemma~\ref{LC} and~\eqref{eq:delta} that $\operatorname{sgn}b_{00}^L=-\delta$, and $\operatorname{sgn}a_{0,2k_L-1}^L=-\delta$. Also, observe that if $(\mu,y)\in W_h$, then $\mu\in o(y^{2k_J})$ and $\operatorname{sgn}y=-\delta$. Hence, it follows from~\eqref{HalfPeriod} and~\eqref{61} that
\[
	T^L_h(\mu,y)<T_1^L(\mu,y)<0,
\]
for every $(\mu,y)\in W_h$. Hence, $T_h^L$ is indeed associated with the second intersection in backward time with $x=-\mu$,
while $T_1^L$ is associated with the first intersection.
\end{proof}

\begin{proof}[Proof of Lemma~\ref{L2}]
	
The proof follows similarly to Lemma~\ref{L1}. We shall prove that $T^R_{td}$ and $P^R_{td}$ are well defined and analytic in
\begin{equation*}
	A^R_{td}:=\left\{(\mu,y)\in(-\varepsilon,\varepsilon)^2\colon |\mu|<|y|^{a_R+\frac{1}{2}}\right\},
\end{equation*}
and in particular in $W^R_{td}$, recalling~\eqref{39}.

Moreover, we shall provide only the third step of the proof, i.e.~the derivation of $T^R_{td}$ and $P^R_{td}$. The first two steps are the same, while the fourth the fifth are similar.

For simplicity, we recall from the proof of Lemma~\ref{L1} that,
\begin{equation}\label{15x}
	h^J(t,y)=\frac{1}{2k_Jb_{00}^J}\sum_{i=1}^{2k_J}\binom{2k_J}{i}(b_{00}^Jt)^iy^{2k_J-i}=\frac{1}{2k_Jb_{00}^J}\big((y+b_{00}^Jt)^{2k_J}-y^{2k_J}\big),
\end{equation}
for $J\in\{L,R\}$. Let
\[
	G^L(\mu,y):=\varphi^L(\mu,0,y), \quad H^L(\mu,y):=\psi^L(\mu,0,y),
\]
and notice from~\eqref{10},~\eqref{12} and~\eqref{13x} that
\begin{equation}\label{18x}
	G^L(\mu,y)=a_{0,2k_L-1}^Lh^L(\mu,y)+\mu R_0^L(\mu,y), \quad H^L(\mu,y)=y+b_{00}^L\mu+\mu S_0^L(\mu,y),
\end{equation}
where
\begin{equation}\label{19x}
	R_0^L(\mu,y)=\sum_{i+k\geqslant2k_L}\beta_{i0k}^L\mu^iy^k, \quad S_0^L(\mu,y)=\sum_{i+k\geqslant1}\alpha_{i0k}^L\mu^iy^k.
\end{equation}
Notice that $T^R_{td}$ is characterized by
\[
	\varphi^R\big(T^R_{td}(\mu,y),G^L(\mu,y),H^L(\mu,y)\big)=0.
\]
Therefore, to obtain $T^R_{td}(\mu,y)$ we consider the function
\begin{equation}\label{17x}
	F^R(t,\mu,y):=\varphi^R\big(t,G^L(\mu,y),H^L(\mu,y)\big),
\end{equation}
and seek to solve $F^R(t,\mu,y)=0$ in $t$. To this end, we now work on its expression. For simplicity, we may omit the dependence of $G^L$ and $H^L$ in relation to $(\mu,y)$. We have from~\eqref{10},~\eqref{18x},~\eqref{19x} and~\eqref{17x} that,
\begin{equation}\label{16x}
	\begin{split}
		\displaystyle F^R(t,\mu,y) &\displaystyle=G^L+a_{0,2k_R-1}^Rh^R(t,H^L)+R^R(t,G^L,H^L)
		\vspace{0.2cm} \\
		\displaystyle &\displaystyle= a_{0,2k_L-1}^Lh^L(\mu,y)+a_{0,2k_R-1}^Rh^R(t,H^L)
		\vspace{0.2cm} \\
		\displaystyle &\displaystyle+ \; \mu R_0^L(\mu,y)+R^R(t,G^L,H^L).
	\end{split}
\end{equation}
Now observe from~\eqref{15x} and~\eqref{18x} that,
\begin{equation}\label{20x}
	\begin{split}
		\displaystyle h^R(t,H^L) &\displaystyle= 	h^R\big(t,y+\mu(b_{00}^L+S_0^L)\big)
		\vspace{0.2cm} \\
		&\displaystyle= 	\frac{1}{2k_Rb_{00}^R}\sum_{i=1}^{2k_R}\binom{2k_R}{i}(b_{00}^Rt)^i\big(y+\mu(b_{00}^L+S_0^L)\big)^{2k_R-i}
		\vspace{0.2cm} \\
		&\displaystyle= 	\frac{1}{2k_Rb_{00}^R}\sum_{i=1}^{2k_R}\Bigg[\binom{2k_R}{i}(b_{00}^Rt)^i\sum_{j=0}^{2k_R-i}\binom{2k_R-i}{j}\mu^j(b_{00}^L+S_0^L)^jy^{2k_R-i-j}\Bigg]
		\vspace{0.2cm} \\
		&\displaystyle=  	\frac{1}{2k_Rb_{00}^R}\sum_{i=1}^{2k_R}\binom{2k_R}{i}(b_{00}^Rt)^i\Bigg(y^{2k_R-i}+
		\vspace{0.2cm} \\
		&\displaystyle 	\qquad\qquad\qquad+(2k_R-i)\mu(b_{00}^L+S_0^L)y^{2k_R-i-1}+
		\vspace{0.2cm} \\
		&\displaystyle \qquad\qquad\qquad\qquad+\sum_{j=2}^{2k_R-i}\binom{2k_R-i}{j}\mu^j (b_{00}^L+S_0^L)^jy^{2k_R-i-j} \Bigg)
		\vspace{0.2cm} \\
		&\displaystyle= 	\frac{1}{2k_Rb_{00}^R}\sum_{i=1}^{2k_R}\binom{2k_R}{i}(b_{00}^Rt)^iy^{2k_R-i}
		\vspace{0.2cm} \\
		&\displaystyle+ 	\frac{\mu(b_{00}^L+S_0^L)}{2k_Rb_{00}^R}\sum_{i=1}^{2k_R}\binom{2k_R}{i}(2k_R-i)(b_{00}^Rt)^iy^{2k_R-1-i}
		\vspace{0.2cm} \\
		&\displaystyle+\frac{1}{2k_Rb_{00}^R}\sum_{i=1}^{2k_R}\Bigg[\binom{2k_R}{i}(b_{00}^Rt)^i \sum_{j=2}^{2k_R-i}\binom{2k_R-i}{j}\mu^j(b_{00}^L+S_0^L)^jy^{2k_R-i-j}\Bigg].
	\end{split}
\end{equation}
Notice from~\eqref{15x} that,
\begin{equation}\label{21x}
	\frac{1}{2k_Rb_{00}^R}\sum_{i=1}^{2k_R}\binom{2k_R}{i}(b_{00}^Rt)^iy^{2k_R-i}=h^R(t,y).
\end{equation}
Knowing that $\binom{2k_R}{i}(2k_R-i)=2k_R\binom{2k_R-1}{i}$, we have from the Binomial Theorem
\begin{equation}\label{22x}
	\begin{array}{l}
		\displaystyle \frac{\mu(b_{00}^L+S_0^L)}{2k_Rb_{00}^R}\sum_{i=1}^{2k_R}\binom{2k_R}{i}(2k_R-i)(b_{00}^Rt)^iy^{2k_R-1-i}
		\vspace{0.2cm} \\
		\displaystyle \qquad\qquad\quad = \frac{\mu(b_{00}^L+S_0^L)}{b_{00}^R}\sum_{i=1}^{2k_R-1}\binom{2k_R-1}{i}(b_{00}^Rt)^iy^{2k_R-1-i}
		\vspace{0.2cm} \\
		\displaystyle \qquad\qquad\qquad\qquad\qquad  = \frac{\mu(b_{00}^L+S_0^L)}{b_{00}^R}\big((y+b_{00}^Rt)^{2k_R-1}-y^{2k_R-1}\big).
	\end{array}
\end{equation}
Also let
\begin{equation}\label{23x}
	\begin{split}
		\displaystyle R_3^R(t,\mu,y) &\displaystyle:= \frac{1}{2k_Rb_{00}^R}\sum_{i=1}^{2k_R}\Bigg[\binom{2k_R}{i}(b_{00}^Rt)^i \sum_{j=2}^{2k_R-i}\binom{2k_R-i}{j}\mu^j(b_{00}^L+S_0^L)^jy^{2k_R-i-j}\Bigg].
	\end{split}
\end{equation}
Then by replacing~\eqref{21x},~\eqref{22x} and~\eqref{23x} in~\eqref{20x} we obtain
\begin{equation}\label{24x}
	\begin{split}
		\displaystyle h^R(t,H^L) &= h^R(t,y)+\frac{b_{00}^L}{b_{00}^R}\big((y+b_{00}^Rt)^{2k_R-1}-y^{2k_R-1}\big)\mu
		\vspace{0.2cm} \\
		&\displaystyle + \frac{b_{00}^L}{b_{00}^R}\big((y+b_{00}^Rt)^{2k_R-1}-y^{2k_R-1}\big)\mu S_0^L(\mu,y)+R_3^R(t,\mu,y),
	\end{split}
\end{equation}
and by replacing~\eqref{24x} in~\eqref{16x} we obtain
\begin{equation}\label{25x}
	\begin{split}
		\displaystyle F^R(t,\mu,y) &\displaystyle= a_{0,2k_L-1}^Lh^L(\mu,y)
		\vspace{0.2cm} \\
		&\displaystyle  + \; a_{0,2k_R-1}^R\frac{b_{00}^L}{b_{00}^R}\big((y+b_{00}^Rt)^{2k_R-1}-y^{2k_R-1}\big)\mu
		\vspace{0.2cm} \\
		&\displaystyle + \; a_{2k_R-1}^Rh^R(t,y)+S_1^R(t,\mu,y)+R_3^R(t,\mu,y)
		\vspace{0.2cm} \\
		&\displaystyle + \; \mu R_0^L(\mu,y)+R^R(t,G^L,H^L),
	\end{split}
\end{equation}
where
\begin{equation}\label{26x}
	S_1^R(t,\mu,y)=\frac{b_{00}^L}{b_{00}^R}\big((y+b_{00}^Rt)^{2k_R-1}-y^{2k_R-1}\big)\mu S_0^L(\mu,y).
\end{equation}
Observe from~\eqref{10},~\eqref{11},~\eqref{12} and~\eqref{13x} that $G^L(0,y)=0$, and $H^L(0,y)=y$. Therefore, it follows from~\eqref{50} and~\eqref{17x} that
\begin{equation}\label{37x}
	F^R(T_0^R(y),0,y)=\varphi^R\big(T_0^R(y),G^L(0,y),H^L(0,y)\big)=\varphi^R\big(T_0^R(y),0,y\big)=0.
\end{equation}
Hence, to solve $F^R(t,\mu,y)=0$ in $t$ we will replace the form $t=T_0^R(y)+s$ and seek for an expression of $s$ as a function of $(\mu,y)$. More precisely, we consider
\begin{equation}\label{41xx}
	H^R(s,\mu,y):=F^R\big(T_0^R(y)+s,\mu,y\big),
\end{equation}
and seek for a solution of $H^R(s,\mu,y)=0$ in $s$. To this end, we also consider a new set of blown-up-variables $(\tau,\nu,z)$ characterized by
\begin{equation*}
	s=\tau \nu z, \quad \mu=\nu z^{a_R}, \quad y=z, \quad a_R=1+\max\{0,2(k_R-k_L)\}.
\end{equation*}
and let 
\begin{equation}\label{30x}
	\mathcal{H}^R(\tau,\nu,z):=\frac{H^R(\tau\nu z,\nu z^{a_R},z)}{\nu z^{2k_R}}.
\end{equation}
We obtain from~\eqref{19},~\eqref{27},~\eqref{28},~\eqref{15x},~\eqref{18x},~\eqref{19x},~\eqref{23x},~\eqref{25x},~\eqref{26x}, and~\eqref{37x} that $\mathcal{H}^R$ is analytic and given by
\begin{equation}\label{33x}
	\mathcal{H}^R(\tau,\nu,z)=C^R-a_{0,2k_R-1}^R\tau+\mathcal{R}^R(\tau,\nu,z),
\end{equation}
with
\begin{equation}\label{40xx}
	C^R=\left\{\begin{array}{ll}
		a_{0,2k_L-1}^L, & \text{if } k_R>k_L, 
		\vspace{0.2cm} \\
		a_{0,2k_L-1}^L-2a_{0,2k_R-1}^Rb_{00}^L/b_{00}^R, &\text{if } k_R=k_L, 
		\vspace{0.2cm} \\
		-2a_{0,2k_R-1}^Rb_{00}^L/b_{00}^R, &\text{if } k_R<k_L, 
	\end{array}\right.
\end{equation}
and
\begin{equation}\label{31x}
	\mathcal{R}^R(\tau,0,0)=0, \quad \frac{\partial\mathcal{R}^R}{\partial\tau}(\tau,0,0)=0.
\end{equation}
Hence, if we let
\[
	\tau_{00}^{R,td}=\left\{\begin{array}{ll}
		a_{0,2k_L-1}^L/a_{0,2k_R-1}^R, & \text{if } k_R>k_L, 
		\vspace{0.2cm} \\
		a_{0,2k_L-1}^L/a_{0,2k_R-1}^R-2b_{00}^L/b_{00}^R, &\text{if } k_R=k_L, 
		\vspace{0.2cm} \\
		-2b_{00}^L/b_{00}^R, &\text{if } k_R<k_L, 
	\end{array}\right.
\]
then we have from~\eqref{33x},~\eqref{40xx} and~\eqref{31x} that,
\[
	\mathcal{H}^R(\tau_{00}^{R,td},0,0)=0, \quad \frac{\partial\mathcal{H}^R}{\partial\tau}(\tau_{00}^{R,td},0,0)=-a_{0,2k_R-1}^R\neq0.
\]
Therefore, it follows from the Implicit Function Theorem that there is a unique analytic function 
\[
	\tau^R_{td}(\nu,z)=\sum_{i,j\geqslant0}\tau_{ij}^{R,td}\nu^iz^j,
\]
such that $\mathcal{H}^R\big(\tau^R_{td}(\nu,z),\nu,v\big)=0$ and $\tau^R_{td}(0,0)=\tau_{00}^{R,td}$. From~\eqref{30x} we have
\begin{equation}\label{36x}
	H^R\big(\tau^R_{td}(\nu,z)\nu z,\nu z^{a_R},z\big)=\nu z^{2k_R}\mathcal{H}^R\big(\tau^R_{td}(\nu,z),\nu,z\big)=0.
\end{equation}
Back to our original variables, we conclude from~\eqref{36x} that
\[
	H^R\left(\tau^R_{td}\left(\frac{\mu}{y^{a_R}},y\right)\frac{\mu}{y^{a_R-1}},\mu,y\right)=0.
\]
This in addition with~\eqref{41xx} implies,
\[
	F^R\left(T_0^R(y)+\tau^R_{td}\left(\frac{\mu}{y^{a_R}},y\right)\frac{\mu}{y^{a_R-1}},\mu,y\right)=0.
\]
Hence, $T^R_{td}(\mu,y)$ exists and is given by
\[
	T^R_{td}(\mu,y)=T_0^R(y)+\tau^R_{td}\left(\frac{\mu}{y^{a_R}},y\right)\frac{\mu}{y^{a_R-1}},
\]
proving~\eqref{TDHalfPeriod}. Equation~\eqref{TDHalfPoincare} now follows by replacing $t=T^R_{td}(\mu,y)$ in
\[
	\psi^R\big(t,G^L(\mu,y),H^L(\mu,y)\big)=y+b_{00}^Rt+b_{00}^L\mu+\sum_{i+j+k\geqslant2}\beta_{ijk}^Rt^i\mu^jy^k,
\]
whose right-hand side follows by replacing~\eqref{18x} and~\eqref{19x} in~\eqref{13x}. That $T^R_{td}$ and $P^R_{td}$ are well defined and analytic in $A^R_{td}$ follows via the same arguments used in the proof of Lemma~\ref{L1}.
\end{proof}

\begin{proof}[Proof of Lemma~\ref{L3}]
The proof follows similarly to that of Lemma~\ref{L2}. We shall prove that $T^J_{td}$ and $P^J_{td}$ are well defined and analytic in $A^J_{td}$, and in particular in $W^J_{td}$. 

The first main difference in this proof is that this time we let
\[
	F^L(t,\mu,y):=\varphi^R\big(-\mu,G^L(t,y),H^L(t,y)\big).
\]
Hence, knowing that $F^L\big(T_0^L(y),0,y)=0$, we let $t=T_0^L(y)+s$ and consider
\[
	H^L(s,\mu,y):=F^L\big(T_0^L(y)+s,\mu,y\big).
\]
We now seek to solve $H^L(s,\mu,y)=0$ for $s$. Similarly to~\eqref{25x}, one can see that
\[
	\begin{split}
		\displaystyle H^L(s,\mu,y) &\displaystyle= a_{0,2k_L-1}^Lh^L\big(T_0^L(y),y\big)-a_{0,2k_L-1}^Lsy^{2k_L-1}+s\cdot O(y^{2k_L})
		\vspace{0.2cm} \\
		&\displaystyle+ \; R_1^L(s,y)+\big(T_0^L(y)+s\big)R_0^L\big(T_0^L(y)+s,y\big)	
		\vspace{0.2cm} \\
		&\displaystyle+ \; \frac{a_{0,2k_R-1}^R}{2k_Rb_{00}^R}\big((y-b_{00}^R\mu)^{2k_R}-y^{2k_R}\big)
		\vspace{0.2cm} \\
		&\displaystyle  + \; \frac{a_{0,2k_R-1}^Rb_{00}^L}{b_{00}^R}\big((y-b_{00}^R\mu)^{2k_R-1}-y^{2k_R-1}\big)T_0^L(y)
		\vspace{0.2cm} \\
		&\displaystyle + \; \frac{a_{0,2k_R-1}^Rb_{00}^L}{b_{00}^R}\big((y-b_{00}^R\mu)^{2k_R-1}-y^{2k_R-1}\big)\Big(s+\big(T_0^L(y)+s\big)S_0^L\big(T_0^L(y)+s,y\big)\Big)
		\vspace{0.2cm} \\
		&\displaystyle + \; a_{0,2k_R-1}^RR_3^R\big(-\mu,T^L_0(y)+s,y\big)+R^R\big(-\mu,G^L,L^L\big).
	\end{split}
\]
The main difference in this proof is a special attention on the expression of $R_3^R$:
\begin{equation}\label{42xx}
	\begin{split}
		\displaystyle R_3^R\big(-\mu,T_0^L(y)+s,y\big) &\displaystyle= \frac{1}{2k_Rb_{00}^R}\sum_{i=1}^{2k_R}\Bigg[\binom{2k_R}{i}(-b_{00}^R\mu)^i
		\vspace{0.2cm} \\ 
		&\displaystyle\times \sum_{j=2}^{2k_R-i}\binom{2k_R-i}{j}\big(T_0^L(y)+s\big)^j(b_{00}^L+S_0^L)^jy^{2k_R-i-j}\Bigg],
	\end{split}
\end{equation}
recall~\eqref{23x}. The special attention is the fact that after the blow-up, the $i=1$ term in~\eqref{42xx} remains outside the remaining function $\mathcal{R}^L$ (recall equations~\eqref{33x} and~\eqref{31x}).
	
More precisely, we consider the new set of blown-up variables $(\tau,\nu,z)$ characterized by
\[
	s=\tau\nu z^{b_L}, \quad \mu=\nu z^{a_L}, \quad y=z,
\]
with $a_L=1+\max\{0,2(k_L-k_R)\}$, $b_L=1+\max\{0,2(k_R-k_L)\}$, and let
\[
	\mathcal{H}^L(\tau,\nu,z):=\frac{H^L(\tau\nu z^{b_L},\nu z^{a_L},z)}{\nu z^{2k_M}},
\]
where $k_M=\max\{k_L,k_R\}$. It now follows similarly to~\eqref{33x} and~\eqref{31x}, that $\mathcal{H}^L$ is analytic and given by
\[
	\mathcal{H}^L(\tau,\nu,z)=a_{0,2k_R-1}^R-a_{0,2k_L-1}^L\tau+\mathcal{R}^L(\tau,\nu,z),
\]
with
\[
	\mathcal{R}^L(\tau,0,0)=0, \quad \frac{\partial\mathcal{R}^L}{\partial\tau}(\tau,0,0)=0.
\]
The proof is then completed via the Implicit Function Theorem. 	
\end{proof}

\section*{Acknowledgments}

DDN was supported by the São Paulo Research Foundation (FAPESP), grants 2024/15612-6 and 2026/03312-3; by the Conselho Nacional de Desenvolvimento Científico e Tecnológico (CNPq), grant 301878/2025-0; and by the Coordenação de Aperfeiçoamento de Pessoal de Nível Superior - Brasil (CAPES), through the MATH-AmSud program, grant 88881.179491/ 2025-01.
PS was supported by S\~ao Paulo Research Foundation (FAPESP), grants 2021/01799-9 and 2024/15612-6. DJWS was supported by Marsden Fund contract MAU2504 managed by Royal Society Te Ap\={a}rangi.

\end{document}